\theoremstyle{thmstyleone}%
\newtheorem{theorem}{Theorem}[section]% meant for sectionwise numbers
\theoremstyle{thmstyletwo}%
\newtheorem{remark}{Remark}%
\theoremstyle{thmstylethree}%
\theoremstyle{thmstylefour}%
\newtheorem{lemma}{Lemma}[section]
\newtheorem{assumption}{Assumption}[section]
\newcommand{\T}{\text{T}}
\newcommand{\BB}{Barzilai-Borwein }
\begin{document}

\title[Article Title]{A Trust Region Method with  Regularized \BB Step-Size for Large-Scale Unconstrained Optimization}

%%=============================================================%%
% GivenName	-> \fnm{Joergen W.}
% Particle	-> \spfx{van der} -> surname prefix
% FamilyName	-> \sur{Ploeg}
% Suffix	-> \sfx{IV}
% \author*[1,2]{\fnm{Joergen W.} \spfx{van der} \sur{Ploeg} 
%  \sfx{IV}}\email{iauthor@gmail.com}
%%=============================================================%%

\author[1]{\fnm{Xin} \sur{Xu}}\email{xustonexin@gmail.com}

\author*[1]{\fnm{Congpei} \sur{An}}\email{andbachcp@gmail.com}
%\equalcont{These authors contributed equally to this work.}

\affil[1]{\orgdiv{School of Mathematics}, \orgname{Southwestern University of Finance and Economics}, \orgaddress{\city{Chengdu}, \country{China}}}

%%\affil[2]{\orgdiv{Key Laboratory of Mathematics Mechanization, Academy of Mathematics and Systems Science}, \orgname{Chinese Academy of Sciences}, \orgaddress{\city{Beijing}, \country{China}}}

%%==================================%%
%% Sample for unstructured abstract %%
%%==================================%%

\abstract{We develop a Trust Region method with Regularized \BB step-size obtained in a previous paper for solving large-scale unconstrained optimization problems. Simultaneously, the non-monotone technique is combined to formulate an efficient trust region method. The proposed method adaptively generates a suitable step-size within the trust region. The minimizer of the resulted model can be easily determined, and at the same time, the convergence of the algorithm is also maintained. Numerical results are presented to support the theoretical results.}

\keywords{trust region, 
	Regularized \BB method, convergence}

%%\pacs[JEL Classification]{D8, H51}

%%\pacs[MSC Classification]{35A01, 65L10, 65L12, 65L20, 65L70}

\maketitle

\section{Introduction}\label{sec1}

In this paper, we consider a trust region method for solving the  unconstrained optimization problem
\begin{equation}\label{generalqua}
\min_{x\in\mathbb{R}^{n}} f(x),
\end{equation}
where $f:\mathbb{R}^n\longrightarrow \mathbb{R}$ is a real-valued twice-continuously differentiable function. A minimizer is denoted by $x_{*}$. Trust region method is iterative method. It has been shown that they are effective and robust for solving problem \eqref{generalqua}, especially for some non-convex, ill-conditioned problems \cite{Conn2000TrustRegionMethods,Yuan2015Recentadvancestrust}. A basic trust region algorithm works as follows. In the $k$-th iteration, the following trust region subproblem 
\begin{equation}\label{Powell}
\begin{aligned}	
&\min_{s\in\mathbb{R}^{n}}   m(x_{k}+s)=f(x_{k})+g_{k}^{\T}s+\frac{1}{2}s^{\T}H_{k}s \\
&\text{ s.t. } \ \|s\|\le \Delta_{k}
\end{aligned}
\end{equation}
is solved, where $g_{k}=\nabla f(x_{k})$, $H_{k}\in\mathbb{R}^{n\times n}$ is the Hessian or the Hessian approximation of $f$ at $x_{k}$, $\|\cdot\|$ refers to the Euclidean norm, and $\Delta_{k}>0$ is the trust region radius. A trial point 
\begin{equation}
\tilde{x}_{k+1}=x_{k}+s_{k}
\end{equation}
is then generated using an exact or an approximation solution $s_{k}$ of the trust region subproblem \eqref{Powell}. The ratio between the actual reduction and the predicted reduction 
\begin{equation}\label{rho}
\rho_{k}=\frac{\text{Ared}_{k}}{\text{Pred}_{k}}=\frac{f(x_{k})-f(\tilde{x}_{k+1})}{m(x_{k})-m(\tilde{x}_{k+1})}
\end{equation}
plays a critical role on deciding whether the trial step $s_{k}$ would be accepted or not and how the trust region radius would be adjusted for the next iteration. Specifically, this trial point is accepted as new iterate $x_{k+1}$ if a sufficient reduction of the true objective function is achieved, i.e., if $\rho_{k}\ge\eta_{1}$, where $\eta_{1}<1$ is a predefined positive threshold. If not, the trial point is rejected, and $x_{k+1}=x_{k}$. At the same time, the trust region radius expands or contracts, respectively. The usual empirical rules for this update can be summarized as follows (see, e.g., Conn et al. \cite{Conn2000TrustRegionMethods}):
\begin{equation}\label{radius}
\Delta_{k+1}=\begin{cases}
\alpha_{1}\Delta_{k}\quad &\text{if} \   \rho_{k}<\eta_{1},\\
\Delta_{k}\quad &\text{if} \   \eta_{1}\le\rho_{k}<\eta_{2},\\
\alpha_{2}\Delta_{k}\quad &\text{if} \   \rho_{k}\ge\eta_{2},
\end{cases}
\end{equation} 
where $\alpha_{1}$, $\alpha_{2}$, $\eta_{1}$, and $\eta_{2}$ are predefined constants such that
$$0<\eta_{1}\le\eta_{2}<1\quad \text{and} \quad 0<\alpha_{1}<1<\alpha_{2}.$$
The trust region method can be traced back to \text{Levenberg} \cite{Levenberg1944methodsolutioncertain} and \text{Marquardt} \cite{Marquardt1963AlgorithmLeastSquares} for solving nonlinear least square problems. Pioneer researches on trust region methods were given by \text{Powell} \cite{Powell1975Convergencepropertiesclass} and \text{Fletcher} \cite{Fletcher1970Efficientgloballyconvergent} et al. For more details on the trust region methods, we refer readers to \cite{Conn2000TrustRegionMethods,Yuan2015Recentadvancestrust} and references therein. 

In trust region methods, solving the subproblem \eqref{Powell} is a key issue \cite{Steihaug1983ConjugateGradientMethod,Liu1989limitedmemoryBFGS,Hager2001MinimizingQuadraticOver,Wang2006subspaceimplementationquasi,Zhou2016newsimplemodel}. These works attempt to solve the trust region subproblem by constructing approximations to the Hessian. The \BB (BB) method \cite{Barzilai1988TwoPointStep} obtains an effective gradient step-size with quasi-Newton properties through the secant equation and has the following form 
\begin{equation*}\label{gradient method}
x_{k+1}=x_{k}+\frac{1}{\alpha_{k}^{BB}}(-g_{k}),
\end{equation*}
where 
\begin{equation}\label{BBlong}
\alpha_{k}^{BB}=\frac{s_{k-1}^{\T}y_{k-1}}{s_{k-1}^{\T}s_{k-1}}\triangleq\alpha_{k}^{BB1},	
\end{equation}
or
\begin{equation}\label{BBshort}
\alpha_{k}^{BB}=\frac{y_{k-1}^{\T}y_{k-1}}{s_{k-1}^{\T}y_{k-1}}\triangleq\alpha_{k}^{BB2},
\end{equation}
where $s_{k-1}=x_{k}-x_{k-1}$ and $y_{k-1}=g_{k}-g_{k-1}$. The two scalars $\alpha_{k}^{BB1}$ and $\alpha_{k}^{BB2}$ in the BB method are the solutions to the following two least squares problems
\begin{flalign*}
\min_{\alpha>0}\|\alpha s_{k-1}- y_{k-1}\|^{2} \quad \mbox{and} \quad  \min_{\alpha>0}\|s_{k-1}-\alpha^{-1}y_{k-1}\|^{2}, 
\end{flalign*}
representing the secant equation at $x_{k}$. By the \text{Cauchy-Schwarz} inequality, we know that 
\begin{equation}\label{CSInBB}
\alpha_{k}^{BB1}\le\alpha_{k}^{BB2},
\end{equation}
if $s_{k-1}^{\T}y_{k-1}>0$. As is known to all, the BB method requires few storage locations and affordable computation operations. Due to its very satisfactory numerical performance and fast convergence ability \cite{Barzilai1988TwoPointStep,Yuan2018StepSizesGradient}, there are several intensive researches about BB method. For example, Raydan \cite{Raydan1997BarzilaiBorweinGradienta} proposed a globalized BB gradient method with non-monotone line search for solving large scale optimization problems. Zhou et al. \cite{Zhou2006GradientMethodsAdaptive} presented an adaptive BB method. Dai et al. \cite{Dai2006cyclicBarzilaiBorwein,Dai2019familyspectralgradient} developed a cyclic BB method and a family spectral gradient method for unconstrained optimization. Burdakov et al \cite{Burdakov2019StabilizedBarzilaiBorweina} introduced a stabilized BB method in order to solve the phenomenon that BB frequently produces long steps. Huang et al. \cite{Huang2021EquippingBarzilaiBorwein} designed a BB method with the two dimensional quadratic termination property. Ferrandi et al. \cite{Ferrandi2023harmonicframeworkstepsize} proposed a harmonic framework for stepsize selection in gradient method.      

An interesting point, solving the trust region subproblem \eqref{Powell} is equivalent to solving a regularized subproblem
\begin{equation}\label{Regular}
\min_{s\in\mathbb{R}^{n}}   m(x_{k}+s)=f(x_{k})+g_{k}^{\T}s+\frac{1}{2}s^{\T}H_{k}s+\frac{1}{2}\lambda_{k}\|s\|^{2}, 
\end{equation}
for a proper regularization parameter $\lambda_{k}\ge0$. That is,
\begin{equation}\label{LM}
s_{k}=-\big(H_{k}+\lambda({\Delta_{k}})I\big)^{-1}g_{k},
\end{equation}
where $\lambda({\Delta_{k}})$ is the regularization parameter, $I$ is the identity matrix. Suppose $H_{k}$ is positive definite. We have the following result. 
\begin{enumerate}%[itemindent=0.1cm]
	\item[(1)]If $\|H_{k}^{-1}g_{k}\|\le\Delta_{k}$, then $\lambda({\Delta_{k}})=0$, $s_{k}=H_{k}^{-1}g_{k}$;
	\item[(2)]otherwise, increase $\lambda({\Delta_{k}})$ so that $\|\big(H_{k}+\lambda(\Delta_{k})I\big)^{-1}g_{k}\|\le\Delta_{k}$ holds.
\end{enumerate}
Thus, trust region algorithm can be viewed as a regularization by using the quadratic penalty function \cite{Yuan2015Recentadvancestrust}. \cite{Cartis2009Adaptivecubicregularisation} proposed an adaptive regularization algorithm using cubic (ARC) for unconstrained optimization. At each iteration of the ARC algorithm, the following cubic model
$$\min_{s\in\mathbb{R}^{n}}   m(x_{k}+s)=f(x_{k})+g_{k}^{\T}s+\frac{1}{2}s^{\T}H_{k}s+\frac{1}{3}\lambda_{k}\|s\|^{3}, $$
is used to obtain the trial step $s_{k}$. In such a framework, the $\lambda_{k}$ in the ARC algorithm might be regarded as inversely proportional to the trust-region radius $\Delta_{k}$ \cite[Theorem 3.1]{Cartis2009Adaptivecubicregularisation}. This inverse relationship between the regularization parameters and the trust region radius can also be observed from \eqref{LM}. Thus $\lambda_{k}$ is increased if insufficient decrease is obtained in some measure of relative objective change, but decreased or unchanged otherwise.

In this paper, we propose a trust region method based on the BB method and regularization idea, which could be regard as a regularized \BB (RBB) method. In our method, the regularized BB stepsize is embedded in the framework of a simple trust region model, that is, the Hessian approximation $H_{k}$ is taken as a real positive definite scalar matrix $\alpha_{k}^{RBB}I$ for some $\alpha_{k}^{RBB}>0$, which inherit certain quasi-Newton properties. And the regularization parameter in RBB method depends on the size of trust region radius, so that the size of the RBB stepsize can truly reflect the local Hessian information, making the RBB stepsize integrated with the trust region framework. 

Then, the trust region subproblem \eqref{Powell} can be written as 
\begin{equation}\label{SimpleTRU}
\begin{aligned}	
&\min_{s\in\mathbb{R}^{n}}   m(x_{k}+s)=f(x_{k})+g_{k}^{\T}s+\frac{1}{2}\alpha_{k}^{RBB}s^{\T}s \\
&\text{ s.t. } \ \|s\|\le \Delta_{k}.
\end{aligned}
\end{equation} 
Suppose $\|g_{k}\|\neq 0$. 	If $\|\frac{1}{\alpha_{k}^{RBB}}g_{k}\|\le\Delta_{k}$, then $s_{k}=-\frac{1}{\alpha_{k}^{RBB}}g_{k}$. Otherwise, the optimal solution to \eqref{SimpleTRU} will be on the boundary of the trust region \cite{Sun2006OptimizationTheoryMethods}, i.e., $s_{k}$ is the solution to the subproblem \eqref{SimpleTRU} with $\|s\|=\Delta_{k}$. Solving the problem, we have the solution $s_{k}=-\frac{\Delta_{k}}{\|g_{k}\|}g_{k}$. Therefore, the solution to subproblem \eqref{SimpleTRU} is
	\begin{equation}\label{tk}
	s_{k}=-t_{k}g_{k},
	\end{equation}
	where $t_{k}=\min\{\frac{1}{\alpha_{k}^{RBB}},\ \frac{\Delta_{k}}{\|g_{k}\|}\}$.

For some problems, non-monotonic strategies can overcome the so-called \text{Maratos} Effect \cite{Maratos1978Exactpenaltyfunction}, which could
lead to the rejection of \text{superlinear} convergent steps since these steps could cause an increase in both the objective function value and the constraint violation, and improve the performance of the algorithms. The traditional trust region methods are monotonic. As a prominent example, in order to improve the efficiency of the trust region methods, the first exploitation of non-monotone strategies in a trust-region framework was proposed in \cite{Deng1993Nonmonotonictrustregion} by modifying the definition of ratio $\rho_{k}$ in \eqref{rho} to assess an agreement between the objective function reduction and the model predicted reduction over the trust region, that is 
\begin{equation}\label{nonrho}
\rho_{k}=\frac{\text{Ared}_{k}}{\text{Pred}_{k}}=\frac{f(x_{l(k)})-f(\tilde{x}_{k+1})}{m(x_{k})-m(\tilde{x}_{k+1})},
\end{equation} 
where $f(x_{l(k)})\ge f(x_{k})$ is a reference value. There are various non-monotonic techniques, i.e., generating different types of reference values $f(x_{l(k)})$, for example, see \cite{Zhang2004NonmonotoneLineSearch,Ahookhosh2011efficientnonmonotonetrust,Esmaeili2014IMPROVEDADAPTIVETRUST}. In the trust region framework, we adopt the non-monotone strategy in \cite{Grippo1991classnonmonotonestabilization}, where 
\begin{equation}\label{Ck}
f(x_{l(k)})=\max_{0\le j\le n(k)}\{f(x_{k-j})\}
\end{equation}
with $n(k)=\min\{M, k\}$, $M\ge0$ is an integer.

The remaining part of this paper is organized as follows. In Section 2, we propose a  trust region algorithm with Regularized \BB step-size for solving large-scale unconstrained optimization problems. In Section 3, we analyze the convergence properties of the trust region algorithm. In Section 4, we present numerical results of the proposed algorithm for an unconstrained optimization test collection and challenging spherical $t$-designs. Finally, in Section 5, we present concluding remarks.

\section{Trust region method with RBB step-size }\label{TRBB}
In this section, we propose several strategies on how to determine the scalar $\alpha_{k}^{RBB}$, which would be a critical issue for the success of our algorithm. It is well know that the classic quasi-Newton equation
\begin{equation}\label{quasi-Newton}
	H_{k}s_{k-1}=y_{k-1},
\end{equation} 
where $s_{k-1}=x_{k}-x_{k-1}$, $y_{k-1}=g_{k}-g_{k-1}$. It is usually difficult to satisfy the quasi-Newton equation \eqref{quasi-Newton} with a nonsingular scalar matrix \cite{Zhou2016newsimplemodel}. Hence, we need some alternative conditions that can maintain the accumulated curvature information along the negative gradient as correct as possible. Some authors have proposed modified quasi-Newton equations, for example, see  \cite{Zhang1999NewQuasiNewton,Hassan2020newquasiNewton,Hassan2023modifiedsecantequation}. 

 An and Xu \cite{An2024RegularizedBarzilaiBorwein} proposed a regularized BB framework by adding regularization term to the BB least squares model. Assuming $H_{k}$ is symmetric positive definite, the regularized BB framework in \cite{An2024RegularizedBarzilaiBorwein} as follows:
\begin{equation*}\label{equ: least square with regular}
\min_{\alpha>0} \Big\{\Vert \alpha s_{k-1} - y_{k-1}\Vert^{2} + \tau_{k}\Vert  \alpha \Phi(H_{k})s_{k-1} - \Phi(H_{k}) y_{k-1}\Vert^{2} \Big\},
\end{equation*}
where $\tau_{k}\ge0$ is the  regularization parameter, $\Phi$ is an operation acting on $H_{k}$. 
Then the solution to above regularized least squares problem as follows
\begin{equation}\label{equ: regBBstepsize}
\alpha_{k}^{new}=\frac{s_{k-1}^{\mathrm{T}}y_{k-1}+\tau_{k} s_{k-1}^{\T}\Phi(H_{k})^{\T}\Phi(H_{k})y_{k-1}}{s_{k-1}^{\mathrm{T}}s_{k-1}+\tau_{k} s_{k-1}^{\T}\Phi(H_{k})^{\T}\Phi(H_{k})s_{k-1}}.
\end{equation}
Selecting the appropriate operator $\Phi$ to modify the original BB least squares model is the core of the algorithm. Below, we present a scheme for selecting the operator $\Phi$, so that the modified model can generate scalar matrices that approximate Hessian. 

We now consider 
\begin{equation}\label{PHi}
	\Phi(H_{k})=\sqrt{H_{k}},
\end{equation}
then 
\begin{equation}\label{new}
	\alpha_{k}^{new}=\frac{s_{k-1}^{\T}(I+\tau_{k}H_{k})y_{k-1}}{s_{k-1}^{\T}(I+\tau_{k}H_{k})s_{k-1}}=\frac{s_{k-1}^{\mathrm{T}}y_{k-1}+\tau_{k} y_{k-1}^{\T}y_{k-1}}{s_{k-1}^{\mathrm{T}}s_{k-1}+\tau_{k} s_{k-1}^{\T}y_{k-1}}.
\end{equation}
Recalling the two BB scalars, from the basic inequality, we have 
\begin{equation}\label{relation}
\alpha_{k}^{BB1}\le\alpha_{k}^{new}\le\alpha_{k}^{BB2}.	
\end{equation} 
Then, $\alpha_{k}^{new}$ remains within the spectrum of $H_{k}$ and possess quasi-Newton properties. An important property of $\alpha_{k}^{new}$ in \eqref{new} is that it is monotonically increasing with respect to $\tau_{k}$, which guides us in choosing the appropriate regularization parameter. 
The success of the \text{ABBmin} scheme in \cite{Frassoldati2008Newadaptivestepsize} inspires us to use the following adaptively alternate scheme
\begin{equation}\label{alternate step}
\alpha_{k}^{RBB}=\begin{cases}
\max\{\alpha_{j}^{new}|j\in\{j_{0},\,\ldots,\,k\}\} & \text{if}\quad \alpha_{k}^{BB1}/\alpha_{k}^{BB2}<\nu_{k},\\
\alpha_{k}^{BB1}& \text{otherwise},
\end{cases}
\end{equation} 
where $j_{0}=\max\{1,\,k-\varrho\}$, $\varrho\ge0$ is an integer,
\begin{equation*}
\nu_{k}=1-\frac{\alpha_{k}^{BB1}}{\alpha_{k}^{new}}\in(0, 1).	
\end{equation*}
From \eqref{relation}, we know $0<\alpha_{k}^{BB1}/\alpha_{k}^{BB2}\le \alpha_{k}^{BB1}/\alpha_{k}^{new}<1$. If $\alpha_{k}^{BB1}/\alpha_{k}^{BB2}<\nu_{k}$, then we have $\alpha_{k}^{BB1}/\alpha_{k}^{BB2}<0.5$. Based on the idea of ABB \cite{Zhou2006GradientMethodsAdaptive}, in this case, it makes sense to choose a short stepsize.  
		 
\subsection{Selection of parameter $\tau_{k}$ and update of trust-region radius $\Delta_{k}$}

Observing the solution $s_{k}$ in \eqref{tk}, we can see that the regularization parameter $\tau_{k}$ of $\alpha_{k}^{RBB}$ and the trust region radius $\Delta_{k}$ determine the performance of the new method. As shown in \eqref{LM}, the size of the trust region radius provides a reference for the selection of regularization parameters $\tau_{k}$ in \eqref{equ: regBBstepsize}. Based on this, we only need to refine the update of the trust region radius $\Delta_{k}$.

According to $\rho_{k}$, the iterations defined in \eqref{radius} are called failed, successful and very successful, respectively. Correspondingly, the trust region radius $\Delta_{k}$ shrinks, remains unchanged, and expands. Intuitively, the smaller the trust region radius, the higher the accuracy of the subproblem model \eqref{SimpleTRU}. In the case of very successful iteration, the reason for increasing $\Delta_{k}$ is that we are confident that the model is accurate within a large range at the current iterate. Therefore, an algorithm should be allowed to take larger trust region radius if needed. Conversely, in the case of failed iteration, shorter trust region radius should be allowed. However, in very successful iterations, if $\rho_{k}$ is significantly greater than $1$, then the degradation of the objective function is likely to depend on an inaccurate local model. In this case, the decrease in the objective function appears rather fortunate. Therefore, \cite{Walmag2005NoteTrustRegion} suggests a \textit{too successful} iteration, i.e., $\rho_{k}>\eta_{3}>1$. At this time, the expansion coefficient $\alpha_{3}$ of the trust region radius $\Delta_{k}$ should be less than that of the $\rho_{k}$ close to $1$, i.e., $1<\alpha_{3}<\alpha_{2}$. 

Inspired by the ``too successful" iteration above, we now consider the other extreme case, a \textit{too failed} iteration, i.e., $0<\rho_{k}<\eta_{4}<\eta_{1}$. In this case, it is reasonable to continue shrink the size of the trust region radius compared to the failed iteration, i.e., $0<\alpha_{4}<\alpha_{1}$. These discussions suggest that we adopt a refined update rule of the trust region radius, and the replacement of \eqref{radius} by
\begin{equation}\label{refined radius}
\Delta_{k+1}=\begin{cases}
\alpha_{4}\Delta_{k}\quad &\text{if} \   \rho_{k}<\eta_{4},\\
\alpha_{1}\Delta_{k}\quad &\text{if} \   \eta_{4}\le\rho_{k}<\eta_{1},\\
\Delta_{k}\quad &\text{if} \   \eta_{1}\le\rho_{k}<\eta_{2},\\
\alpha_{2}\Delta_{k}\quad &\text{if} \   \eta_{2}\le\rho_{k}<\eta_{3},\\
\alpha_{3}\Delta_{k}\quad &\text{if} \   \rho_{k}\ge\eta_{3},
\end{cases}
\end{equation}  
where
\begin{equation*}\label{eta}
0<\eta_{4}<\eta_{1}<\eta_{2}<1<\eta_{3},
\end{equation*}
and
\begin{equation*}\label{scale}
0<\alpha_{4}<\alpha_{1}<1<\alpha_{3}<\alpha_{2}.
\end{equation*}

This is, however, only part of the story. Another main purpose of discussing the update rule of the refined trust region radius is to design effective regularization parameters in $\alpha_{k}^{RBB}$, 
from \eqref{LM}, \eqref{equ: regBBstepsize} and \eqref{tk}, we know that there is an inverse proportional relationship between the regularization parameters $\tau_{k}$ and the trust region radius $\Delta_{k}$. Then, we obtain the following update rule of the regularization parameter
\begin{equation}\label{parameters}
\tau_{k}=\Delta_{k}^{-1}.
\end{equation}
Reasonably, we can also choose other functions with non-negative monotonic properties to generate the regularization parameters. For example, a regularization parameter can also have the following exponential form
\begin{equation}\label{exp}
\tau_{k}=e^{-\Delta_{k}}.
\end{equation} 
On these two simple elementary functions, for any $\Delta_{k}\ge0$, $e^{-\Delta_{k}}<\Delta_{k}^{-1}$ is established, and 	$\Delta_{k}^{-1}\in(0,\infty)$ and $e^{-\Delta_{k}}\in\left(0,1\right]$ are hold. Due to $e^{-\Delta_{k}}\in\left(0,1\right]$, it is equivalent to fine-tuning the original BB1 step-size. Intuitively, if the current trust region radius is large, it indicates that the trust region subproblem  model is consistent with the actual target in a large range, and choosing a long step size helps the algorithm converge quickly. This means that we should reduce the penalty for BB1 and choose a small regularization parameter $\tau_{k}$. Otherwise, a larger regularization parameter should be selected.  
\begin{remark}
	Schemes \eqref{parameters} and \eqref{exp} represent two types of regularization parameters, \eqref{parameters} can be generalized to $\tau_{k}=\Delta_{k}^{-m}$ where $m\ge 1$ is an integer, and \eqref{exp} can be generalized to $\tau_{k}=a^{-\Delta_{k}}$ where $a>1$ is a constant.
\end{remark}	

Now we state the trust region algorithm with Regularized \BB step-size for solving large-scale unconstrained optimization as follows.
\begin{algorithm}[h!]
	\caption{Trust Region method with  Regularized \BB step-size.}\label{alg:TRBB}
	\begin{algorithmic}[1]
		\Require stopping criterion: $\varepsilon>0$, ${\text{MaxIt}>0}$;\\
		Initialization: $x_{1}\in\mathbb{R}^{n}$, $t_{\rm{max}}\ge t_{\rm{min}}>0$, initial step-size $t_{1}\in (t_{min},\,t_{max})$, $0<\eta_{4}\le\eta_{1}\le\eta_{2}<1<\eta_{3}$, $0<\alpha_{4}<\alpha_{1}<1<\alpha_{3}<\alpha_{2}$, $\vartheta$, $\varrho$, $M\in\mathbb{N}^{+}$, regularization parameter and trust region radius: $\tau_{1}$, $\Delta_{1}$. $k=1$.
		\While{$k<\rm{MaxIt}$ {\text{and}} $\|g_{k}\|_{2}>\varepsilon\|g_{1}\|_{2}$}
		{Solve the subproblem \eqref{SimpleTRU} for $s_{k}$.\\
			Compute $\rho_{k}$ by \eqref{nonrho}.\\
			Set
			\begin{equation}\label{update}
			x_{k+1}=\begin{cases}
			x_{k}+s_{k}\quad &\text{if} \ \rho_{k}\ge\eta_{1},\\
			x_{k}\quad &\text{otherwise}.
			\end{cases}
			\end{equation}\\
			Set $\Delta_{k+1}$ by $\eqref{refined radius}$ and $\tau_{k+1}$ by  $\eqref{parameters}$ or $\eqref{exp}$, respectively.\\
			$k=k+1$.}
		\EndWhile
	\end{algorithmic}
\end{algorithm}

When negative curvature is detected in  some iterations, that is, $s_{k-1}^{\T}y_{k-1}\le0$, in which case $\alpha_{k}^{new}=\frac{\|y_{k-1}\|}{\|s_{k-1}\|}$ will be used. Meanwhile, we restrict $1/\alpha_{k}^{RBB}$ to the interval $[t_{min},\ t_{max}]$. 
\begin{remark}
	The number of failed iterations from $x_{k}$ to $x_{k+1}$ is called the internal circulation. which means the computation times of the subproblem between the current and the next iterates.	
\end{remark}

\section{Convergence analysis}
In this section, we discuss the global convergence of Algorithm \ref{alg:TRBB}. This analysis follows the convergence analysis framework of the trust region methods (see \cite{Conn2000TrustRegionMethods,Sun2006OptimizationTheoryMethods,1999NumericalOptimization,Zhou2016newsimplemodel}).

Throughout the paper, we consider the following assumption in order to analyze the convergence of the proposed Algorithm \ref{alg:TRBB}.
\begin{assumption}\label{assumption1}
	The level set $L(x_{1})=\{x|f(x)\le f(x_{1})\}$ is bounded and $f(x)$ is continuously differentiable in $L(x_{1})$ for any given $x_{1}\in\mathbb{R}^{n}$.	
\end{assumption}

\begin{lemma}\label{residual}
	Assume that $s_{k}$ is a solution of \eqref{SimpleTRU}, then $f(x_{k})-f(x_{k}+s_{k})-\text{Pred}_{k}=\mathcal{O}(\|s_{k}\|^{2})$.
\end{lemma}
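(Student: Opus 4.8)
The plan is to write both the true reduction $f(x_{k})-f(x_{k}+s_{k})$ and the predicted reduction $\text{Pred}_{k}$ explicitly and then subtract, so that the terms linear in $s_{k}$ cancel and only quadratic remainders survive. First I would unfold $\text{Pred}_{k}=m(x_{k})-m(x_{k}+s_{k})$ from the simple model in \eqref{SimpleTRU}: since $m(x_{k})=f(x_{k})$ and $m(x_{k}+s_{k})=f(x_{k})+g_{k}^{\T}s_{k}+\tfrac{1}{2}\alpha_{k}^{RBB}\|s_{k}\|^{2}$, this gives $\text{Pred}_{k}=-g_{k}^{\T}s_{k}-\tfrac{1}{2}\alpha_{k}^{RBB}\|s_{k}\|^{2}$.

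Next I would expand the actual reduction by Taylor's theorem. As $f$ is twice continuously differentiable, for some $\xi_{k}$ on the segment joining $x_{k}$ and $x_{k}+s_{k}$ one has
\begin{equation*}
f(x_{k}+s_{k})=f(x_{k})+g_{k}^{\T}s_{k}+\tfrac{1}{2}s_{k}^{\T}\nabla^{2}f(\xi_{k})s_{k},
\end{equation*}
hence $f(x_{k})-f(x_{k}+s_{k})=-g_{k}^{\T}s_{k}-\tfrac{1}{2}s_{k}^{\T}\nabla^{2}f(\xi_{k})s_{k}$. Subtracting, the linear terms $-g_{k}^{\T}s_{k}$ cancel and I would be left with
\begin{equation*}
f(x_{k})-f(x_{k}+s_{k})-\text{Pred}_{k}=\tfrac{1}{2}\alpha_{k}^{RBB}\|s_{k}\|^{2}-\tfrac{1}{2}s_{k}^{\T}\nabla^{2}f(\xi_{k})s_{k}.
\end{equation*}

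It then remains to bound the right-hand side by a fixed multiple of $\|s_{k}\|^{2}$. For the first term I would invoke the algorithmic safeguard $1/\alpha_{k}^{RBB}\in[t_{\min},t_{\max}]$ stated after Algorithm \ref{alg:TRBB}, which yields $0<\alpha_{k}^{RBB}\le 1/t_{\min}$. For the second term I would use Assumption \ref{assumption1}: since $L(x_{1})$ is bounded and $\nabla^{2}f$ is continuous, $\nabla^{2}f$ is bounded on a compact set $\mathcal{S}$ containing the relevant iterates and trial points, say $\|\nabla^{2}f(\xi_{k})\|\le L$. Then $|s_{k}^{\T}\nabla^{2}f(\xi_{k})s_{k}|\le L\|s_{k}\|^{2}$, and combining the two estimates gives $f(x_{k})-f(x_{k}+s_{k})-\text{Pred}_{k}=\mathcal{O}(\|s_{k}\|^{2})$.

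The step I expect to require the most care is the \emph{uniformity} of the constant hidden in $\mathcal{O}(\cdot)$: the conclusion is meaningful only if both $\alpha_{k}^{RBB}$ and $\|\nabla^{2}f(\xi_{k})\|$ are bounded by constants independent of $k$. The first bound is guaranteed by the explicit restriction of $1/\alpha_{k}^{RBB}$ to $[t_{\min},t_{\max}]$, while the second rests on Assumption \ref{assumption1} together with the twice continuous differentiability of $f$; I would therefore make these two uniform bounds explicit before writing the final $\mathcal{O}$ estimate.
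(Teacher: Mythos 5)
Your argument is exactly the route the paper takes—the paper's proof is the one-liner ``From the definition of $\text{Pred}_{k}$ and Taylor expansion, this is trivial,'' and your expansion of $\text{Pred}_{k}$, Taylor's theorem with Lagrange remainder, cancellation of the linear terms, and the two uniform bounds (the safeguard $1/\alpha_{k}^{RBB}\in[t_{\min},t_{\max}]$ and the boundedness of $\nabla^{2}f$ near the level set) are precisely the details the authors suppress. Your closing remark on the uniformity of the $\mathcal{O}(\cdot)$ constant is a welcome refinement, since that is what makes the lemma usable later when $\Delta_{k(i)}\rightarrow 0$ in the well-posedness and convergence proofs.
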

\begin{proof}
	From the definition of $\text{Pred}_{k}$ and Taylor expansion, this is trivial.	
\end{proof}
\begin{lemma}\label{WellDe}
	Suppose $\|g_{k}\|\neq 0$. The solution $s_{k}$ of the model \eqref{SimpleTRU}
	satisfies
	\begin{equation}\label{WellDefine}
	\text{Pred}_{k}=m(x_{k})-m(x_{k}+s_{k})\ge \frac{1}{2}\|g_{k}\|\min\{\Delta_{k}, \frac{\|g_{k}\|}{\alpha_{k}^{RBB}}\}.
	\end{equation}
\end{lemma}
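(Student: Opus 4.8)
The plan is to exploit the fully explicit form of the minimizer. Since the Hessian approximation in \eqref{SimpleTRU} is the scalar matrix $\alpha_{k}^{RBB}I$, the solution is $s_{k}=-t_{k}g_{k}$ with $t_{k}=\min\{1/\alpha_{k}^{RBB},\,\Delta_{k}/\|g_{k}\|\}$, as recorded in \eqref{tk}. First I would substitute this $s_{k}$ directly into $\text{Pred}_{k}=m(x_{k})-m(x_{k}+s_{k})=-\big(g_{k}^{\T}s_{k}+\tfrac12\alpha_{k}^{RBB}\|s_{k}\|^{2}\big)$, using $g_{k}^{\T}s_{k}=-t_{k}\|g_{k}\|^{2}$ and $\|s_{k}\|^{2}=t_{k}^{2}\|g_{k}\|^{2}$, to obtain the closed form $\text{Pred}_{k}=t_{k}\|g_{k}\|^{2}\big(1-\tfrac12\alpha_{k}^{RBB}t_{k}\big)$. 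This reduces the whole lemma to a one-variable estimate.

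Next I would split according to which term attains the minimum in $t_{k}$. In the first case $t_{k}=1/\alpha_{k}^{RBB}\le\Delta_{k}/\|g_{k}\|$, equivalently $\|g_{k}\|/\alpha_{k}^{RBB}\le\Delta_{k}$, so that $\min\{\Delta_{k},\,\|g_{k}\|/\alpha_{k}^{RBB}\}=\|g_{k}\|/\alpha_{k}^{RBB}$; plugging $t_{k}=1/\alpha_{k}^{RBB}$ into the closed form gives $\text{Pred}_{k}=\tfrac12\|g_{k}\|^{2}/\alpha_{k}^{RBB}$, which already equals the right-hand side of \eqref{WellDefine} with equality. In the second case $t_{k}=\Delta_{k}/\|g_{k}\|\le1/\alpha_{k}^{RBB}$, equivalently $\alpha_{k}^{RBB}\Delta_{k}\le\|g_{k}\|$, so the minimum is $\Delta_{k}$; substituting $t_{k}=\Delta_{k}/\|g_{k}\|$ yields $\text{Pred}_{k}=\Delta_{k}\|g_{k}\|\big(1-\tfrac12\alpha_{k}^{RBB}\Delta_{k}/\|g_{k}\|\big)$, and the inequality $\alpha_{k}^{RBB}\Delta_{k}/\|g_{k}\|\le1$ forces the parenthetical factor to be at least $\tfrac12$, giving $\text{Pred}_{k}\ge\tfrac12\|g_{k}\|\Delta_{k}$. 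In both cases the bound $\text{Pred}_{k}\ge\tfrac12\|g_{k}\|\min\{\Delta_{k},\,\|g_{k}\|/\alpha_{k}^{RBB}\}$ holds.

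I do not expect a genuine obstacle beyond careful bookkeeping: the main thing to verify is that the case conditions on $t_{k}$ line up exactly with which quantity the $\min$ on the right-hand side selects, and that $\alpha_{k}^{RBB}>0$ (guaranteed since $1/\alpha_{k}^{RBB}$ is restricted to $[t_{\min},t_{\max}]$ with $t_{\min}>0$), so that the divisions and the sign of the quadratic term are legitimate. The only mild subtlety is the second (boundary) case, where the decrease is not exact but must be bounded below using $\alpha_{k}^{RBB}\Delta_{k}\le\|g_{k}\|$; this is the standard Cauchy-decrease step, and it is precisely where the factor $\tfrac12$ is incurred.
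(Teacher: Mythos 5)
Your proof is correct and follows essentially the same route as the paper's: a case split on which term attains the minimum in $t_{k}$, direct substitution of $s_{k}=-t_{k}g_{k}$ into the quadratic model, and the bound $\alpha_{k}^{RBB}\Delta_{k}\le\|g_{k}\|$ to absorb the quadratic term in the boundary case. The only difference is cosmetic — you factor the computation through the unified closed form $\text{Pred}_{k}=t_{k}\|g_{k}\|^{2}\bigl(1-\tfrac12\alpha_{k}^{RBB}t_{k}\bigr)$ before splitting, whereas the paper computes each case separately.
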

\begin{proof}
	If $\frac{\|g_{k}\|}{\alpha_{k}^{RBB}}\le\Delta_{k}$, then $s_{k}=-\frac{1}{\alpha_{k}^{RBB}}g_{k}$. Hence, we have
	\begin{equation}\label{first}
	\begin{split}
	\text{Pred}_{k}&=f(x_{k})-\big(f(x_{k})+g_{k}^{\T}s_{k}+\frac{1}{2}\alpha_{k}^{RBB}s_{k}^{\T}s_{k}\big)\\
	&=-g_{k}^{\T}(-\frac{1}{\alpha_{k}^{RBB}}g_{k})-\frac{1}{2}\alpha_{k}^{RBB}(\frac{1}{\alpha_{k}^{RBB}})^{2}g_{k}^{\T}g_{k}\\
	&=\frac{1}{2}\frac{1}{\alpha_{k}^{RBB}}\|g_{k}\|^{2}.
	\end{split}
	\end{equation} 	
	If $\frac{\|g_{k}\|}{\alpha_{k}^{RBB}}>\Delta_{k}$, then $s_{k}=-\frac{\Delta_{k}}{\|g_{k}\|}g_{k}$. We have 
	\begin{equation}\label{second}
	\begin{split}
	\text{Pred}_{k}&=f(x_{k})-(f(x_{k})+g_{k}^{\T}s_{k}+\frac{1}{2}\alpha_{k}^{RBB}s_{k}^{\T}s_{k})\\
	&=-g_{k}^{\T}(-\frac{\Delta_{k}}{\|g_{k}\|}g_{k})-\frac{1}{2}\alpha_{k}^{RBB}\frac{\Delta_{k}^{2}}{\|g_{k}\|^{2}}\|g_{k}\|^{2}\\
	&=\frac{\Delta_{k}}{\|g_{k}\|}\|g_{k}\|^{2}-\frac{1}{2}\alpha_{k}^{RBB}\Delta_{k}^{2} \\
	&>\frac{1}{2}\|g_{k}\|\Delta_{k}.
	\end{split}
	\end{equation}
	It follows from \eqref{first} and \eqref{second} that \eqref{WellDefine} holds.  
\end{proof}
\begin{remark}
	Lemma \ref{WellDe} shows that the $\rho_{k}$ in \eqref{nonrho} is well-defined, and the solution to the subproblem could always reduce the function value of the subproblem.
\end{remark}
\begin{lemma}\label{inner iteration}
	If $\|g_{k}\|\neq 0$, then Algorithm \ref{alg:TRBB} is well-posed, that is, Algorithm \ref{alg:TRBB} won't cycle infinitely in internal circulation.
\end{lemma}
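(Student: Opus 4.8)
The plan is to argue by contradiction and show that the non-monotone acceptance ratio is forced above $\eta_{1}$ once the radius has been shrunk enough. Suppose the internal circulation at the current iterate does not terminate, so that there are infinitely many consecutive failed steps. By \eqref{update} each failed step leaves the iterate unchanged, hence throughout the circulation the point is frozen at $x_{k}$, the gradient $g_{k}=g\neq 0$ is fixed, and the data $s_{k-1},y_{k-1}$ determining $\alpha_{k}^{RBB}$ in \eqref{alternate step} do not change; in particular the safeguard $1/\alpha_{k}^{RBB}\in[t_{\min},t_{\max}]$ keeps $\alpha_{k}^{RBB}$ in $[1/t_{\max},1/t_{\min}]$. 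A failed step satisfies $\rho_{k}<\eta_{1}$, so by \eqref{refined radius} the radius is multiplied by $\alpha_{4}$ or $\alpha_{1}$, both strictly less than $1$; therefore the successive radii $\Delta$ generated in the circulation decrease geometrically and $\Delta\to 0$.

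Next I would exploit that $\text{Pred}_{k}$ stays comparable to $\|g\|\Delta$. Since $\alpha_{k}^{RBB}\le 1/t_{\min}$, the quantity $\|g\|/\alpha_{k}^{RBB}\ge t_{\min}\|g\|>0$ is bounded away from zero, so once $\Delta<t_{\min}\|g\|$ the minimum in Lemma \ref{WellDe} is attained by $\Delta$. Consequently $s_{k}=-\tfrac{\Delta}{\|g\|}g$ lies on the boundary with $\|s_{k}\|=\Delta$, and \eqref{WellDefine} yields $\text{Pred}_{k}\ge\tfrac{1}{2}\|g\|\Delta$.

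Finally I would estimate the ratio. Using the non-monotone reference value $f(x_{l(k)})\ge f(x_{k})$ from \eqref{Ck}, and writing $R_{k}:=f(x_{k})-f(x_{k}+s_{k})-\text{Pred}_{k}$, the definition \eqref{nonrho} gives
\begin{equation*}
\rho_{k}=\frac{f(x_{l(k)})-f(x_{k}+s_{k})}{\text{Pred}_{k}}\ge\frac{f(x_{k})-f(x_{k}+s_{k})}{\text{Pred}_{k}}=1+\frac{R_{k}}{\text{Pred}_{k}}.
\end{equation*}
By Lemma \ref{residual}, $|R_{k}|=\mathcal{O}(\|s_{k}\|^{2})=\mathcal{O}(\Delta^{2})$, while $\text{Pred}_{k}\ge\tfrac{1}{2}\|g\|\Delta$, so $|R_{k}/\text{Pred}_{k}|=\mathcal{O}(\Delta)\to 0$. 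Hence $\rho_{k}\to 1>\eta_{1}$, and for all sufficiently small $\Delta$ one gets $\rho_{k}\ge\eta_{1}$, i.e.\ a successful step. This contradicts the assumption of infinitely many consecutive failures and proves the claim.

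The main obstacle I anticipate is not the limiting computation but the uniform control of $\alpha_{k}^{RBB}$ (hence of $\text{Pred}_{k}$) across the circulation, together with the correct handling of the non-monotone ratio. Both are benign here: freezing $x_{k}$ freezes the data $s_{k-1},y_{k-1}$, the explicit safeguard $1/\alpha_{k}^{RBB}\in[t_{\min},t_{\max}]$ supplies the needed two-sided bound (and also neutralizes the negative-curvature branch, since the same clipping applies), and because $f(x_{l(k)})\ge f(x_{k})$ the non-monotone $\rho_{k}$ is never smaller than its monotone counterpart, so the classical trust-region argument carries through verbatim.
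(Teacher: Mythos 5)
Your proposal is correct and follows essentially the same route as the paper's proof: a contradiction argument in which the radius shrinks to zero during the internal circulation, Lemma \ref{residual} combined with the lower bound $\text{Pred}_{k}\ge\tfrac{1}{2}\|g_{k}\|\Delta$ from Lemma \ref{WellDe} forces the reduction ratio toward $1$, and the non-monotone inequality $f(x_{l(k)})\ge f(x_{k})$ then yields $\rho_{k}\ge\eta_{1}$ for small radii, contradicting perpetual failure. If anything, you are slightly more careful than the paper: you make explicit the uniform bound $1/\alpha_{k}^{RBB}\in[t_{\min},t_{\max}]$ needed to guarantee that the minimum in \eqref{WellDefine} is eventually attained at $\Delta$ (the paper uses this implicitly), and you invoke the definition \eqref{Ck} directly instead of forward-referencing Lemma \ref{monotone}.
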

\begin{proof}
	We prove the lemma by way of contradiction. At the current iterate $x_{k}$, we denote the inner cycling index at $x_{k}$ by $k(i)$, the solution of \eqref{SimpleTRU} by $s_{k(i)}$ and the corresponding predicted reduction by $\text{Pred}_{k(i)}$. Then we have 
	\begin{equation}\label{contra}
	\rho_{k(i)}<\eta_{1}\quad \text{for all} \quad i=1,2,\ldots
	\end{equation}
	and $\Delta_{k(i)}\rightarrow0$ as $i\rightarrow\infty$.
	
	By Lemmas \ref{residual} and \ref{WellDe}, for $i$ large enough, we have
	\begin{equation}
	\begin{split}
	\Big|\frac{f(x_{k})-f(x_{k}+s_{k(i)})}{m(x_{k})-m(x_{k}+s_{k(i)})}-1\Big|&=\Big|\frac{f(x_{k})-f(x_{k}+s_{k(i)})-\text{Pred}_{s_{k(i)}}}{\text{Pred}_{k(i)}}\Big|\\
	&=\frac{\mathcal{O}(\|s_{k(i)}\|^{2})}{\text{Pred}_{k(i)}}\\
	&\le\frac{\mathcal{O}(\|s_{k(i)}\|^{2})}{\frac{1}{2}\|g_{k}\|\min\{\Delta_{k(i)},\frac{\|g_{k}\|}{\alpha^{RBB}}\}}\\
	&=\frac{\mathcal{O}(\|s_{k(i)}\|^{2})}{\frac{1}{2}\|g_{k}\|\Delta_{k(i)}}\\
	&\le\frac{\mathcal{O}(\Delta_{k(i)})}{\frac{1}{2}\|g_{k}\|}\rightarrow 0,
	\end{split}
	\end{equation}	
	which implies 
	\begin{equation}\label{c}
	\lim_{i \to \infty}\frac{f(x_{k})-f(x_{k}+s_{k(i)})}{\text{Pred}_{k(i)}}=1.
	\end{equation}
	Combining \eqref{nonrho}, \eqref{c} and Lemma \ref{monotone}, we have
	\begin{equation*}
	\rho_{k(i)}=\frac{f_{l(k)}-f(x_{k}+s_{k(i)})}{\text{Pred}_{k(i)}}\ge\frac{f(x_{k})-f(x_{k}+s_{k(i)})}{\text{Pred}_{k(i)}}.
	\end{equation*}
	This inequality together with \eqref{c} imply that $\rho_{k(i)}\ge\eta_{1}$ for sufficiently large $i$, which contradicts \eqref{contra}. We complete the proof.
\end{proof}
\begin{lemma}\label{level set}
	Suppose $\{x_{k}\}$ is an infinite sequence generated by Algorithm \ref{alg:TRBB}. Then $\{x_{k}\}\subset L(x_{1})$.
\end{lemma}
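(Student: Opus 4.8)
The plan is to argue by induction on $k$ that every iterate satisfies $f(x_{k})\le f(x_{1})$, which is exactly the assertion $\{x_{k}\}\subset L(x_{1})$. The difficulty relative to a monotone scheme is that the acceptance test in \eqref{update} compares the trial value against the (possibly larger) reference value $f(x_{l(k)})$ rather than against $f(x_{k})$, so the objective may increase from one accepted step to the next; the crux is therefore to show that this reference value never rises above $f(x_{1})$.

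First I would record two elementary facts. By the definition \eqref{Ck}, the reference value $f(x_{l(k)})$ is a maximum taken over $f(x_{k}),f(x_{k-1}),\dots$, so the term $j=0$ already gives $f(x_{k})\le f(x_{l(k)})$ for every $k$. Next, by Lemma \ref{WellDe} together with $\|g_{k}\|\neq 0$, $\Delta_{k}>0$ and $\alpha_{k}^{RBB}>0$, the predicted reduction satisfies $\text{Pred}_{k}>0$. Hence, whenever a step is accepted ($\rho_{k}\ge\eta_{1}$), the definition \eqref{nonrho} of $\rho_{k}$ yields
\begin{equation*}
f(x_{l(k)})-f(x_{k+1})=\rho_{k}\,\text{Pred}_{k}\ge\eta_{1}\,\text{Pred}_{k}>0,
\end{equation*}
since $\eta_{1}>0$; and whenever a step is rejected we have $x_{k+1}=x_{k}$, so $f(x_{k+1})=f(x_{k})\le f(x_{l(k)})$. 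In either case we obtain $f(x_{k+1})\le f(x_{l(k)})$.

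With these facts in hand the induction is short. The base case $f(x_{1})\le f(x_{1})$ is trivial. Assuming $f(x_{i})\le f(x_{1})$ for all $1\le i\le k$, every function value appearing in the maximum $f(x_{l(k)})=\max_{0\le j\le n(k)}f(x_{k-j})$ is indexed by some $x_{i}$ with $i\le k$, so by the induction hypothesis each such value is bounded by $f(x_{1})$; therefore $f(x_{l(k)})\le f(x_{1})$. Combining this with $f(x_{k+1})\le f(x_{l(k)})$ from the preceding paragraph gives $f(x_{k+1})\le f(x_{1})$, which completes the induction and hence the proof.

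The only point requiring care is the control of the non-monotone reference value: one must ensure that the ceiling $f(x_{l(k)})$ stays below $f(x_{1})$, which is precisely why the argument is cast as a strong induction over all previous iterates. Equivalently, one may first establish that $\{f(x_{l(k)})\}$ is non-increasing, as in Lemma \ref{monotone}, and then conclude $f(x_{k})\le f(x_{l(k)})\le f(x_{l(1)})=f(x_{1})$. I expect this bookkeeping around the reference value, rather than any analytic estimate, to be the main obstacle; the underlying inequalities are immediate from Lemma \ref{WellDe} and the acceptance rule \eqref{update}.
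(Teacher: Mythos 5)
Your proof is correct and follows essentially the same route as the paper's: strong induction on $k$, using the acceptance rule \eqref{update} and the definition \eqref{nonrho} of $\rho_{k}$ to obtain $f(x_{k+1})\le f(x_{l(k)})\le f(x_{1})$, where the last bound comes from applying the induction hypothesis to every iterate appearing in the maximum \eqref{Ck}. If anything, your write-up is slightly more careful than the paper's, since you explicitly treat the rejected-step case $x_{k+1}=x_{k}$ and explicitly invoke Lemma \ref{WellDe} to ensure $\text{Pred}_{k}>0$, both of which the paper leaves implicit.
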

\begin{proof}
	By the induction on $k$. If $k=1$, $x_{1}\in L(x_{1})$ obviously. Assume that $x_{k}\in L(x_{1})$ for $k>1$. From \eqref{update} in Algorithm \ref{alg:TRBB} and \eqref{nonrho}, we have
	\begin{equation}\label{fe}
	f(x_{k+1})\le f(x_{l(k)})-\eta_{1}\text{Pred}_{k}
	\end{equation}
	holds.
	By the definition of $f(x_{l(k)})$ in \eqref{Ck} and the induction on $k$, we have $l(k)\le k$ and $f_{l(k)}\le f(x_{1})$. Thus, by \eqref{fe}, we have $f(x_{k+1})<f(x_{1})$, which implies  $x_{k+1}\in L(x_{1})$. We complete the proof. 
\end{proof}
\begin{lemma}\label{monotone}
	Suppose $\{x_{k}\}$ is an infinite sequence generated by Algorithm \ref{alg:TRBB}. Then the sequence $\{f(x_{l(k)})\}$ is monotonic non-increasing and convergent.
\end{lemma}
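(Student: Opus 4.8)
The plan is to reduce the statement to a single one-step descent inequality plus a combinatorial bookkeeping argument on the sliding window defining $l(k)$, and then to close with a boundedness-below argument drawn from Assumption \ref{assumption1}.

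First I would record the one-step inequality. Since $\|g_{k}\|\neq0$ along the sequence, Lemma \ref{WellDe} gives $\text{Pred}_{k}>0$, and the acceptance rule \eqref{update} together with \eqref{nonrho} yields precisely the bound \eqref{fe}, namely $f(x_{k+1})\le f(x_{l(k)})-\eta_{1}\text{Pred}_{k}\le f(x_{l(k)})$. This is the only analytic input; everything else is manipulation of the maximum in \eqref{Ck}.

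Second, I would prove the monotonicity $f(x_{l(k+1)})\le f(x_{l(k)})$. Writing $n(k)=\min\{M,k\}$, the defining maximum runs over the window $\{x_{k-n(k)},\ldots,x_{k}\}$. I split off the newest iterate, $f(x_{l(k+1)})=\max\{f(x_{k+1}),\ \max_{1\le j\le n(k+1)}f(x_{k+1-j})\}$, and reindex the residual term as $\max_{0\le i\le n(k+1)-1}f(x_{k-i})$. Because $n(k+1)\le n(k)+1$ in both regimes $k<M$ and $k\ge M$, this index set embeds into $\{0,\ldots,n(k)\}$, so the residual term is $\le f(x_{l(k)})$; the newest term $f(x_{k+1})$ is $\le f(x_{l(k)})$ by the step-one inequality. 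Taking the maximum of the two yields $f(x_{l(k+1)})\le f(x_{l(k)})$, so $\{f(x_{l(k)})\}$ is non-increasing.

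Third, for convergence I would invoke boundedness below. By Lemma \ref{level set} we have $\{x_{k}\}\subset L(x_{1})$, so every reference point $x_{l(k)}$ lies in $L(x_{1})$. Under Assumption \ref{assumption1} this level set is bounded, and since $f$ is continuous it is closed, hence compact; therefore $f$ attains a finite minimum on $L(x_{1})$ and is bounded below there, whence $\{f(x_{l(k)})\}$ is bounded below. A monotone non-increasing sequence that is bounded below converges, completing the proof. I expect the main obstacle to be the index bookkeeping in the second step, specifically verifying $n(k+1)\le n(k)+1$ across the two regimes and correctly peeling off $x_{k+1}$ so the residual window embeds in the previous one; the remaining pieces are either recycled from \eqref{fe} or a routine compactness argument.
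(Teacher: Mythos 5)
Your proposal is correct and follows essentially the same route as the paper: the one-step inequality $f(x_{k+1})\le f(x_{l(k)})-\eta_{1}\text{Pred}_{k}$ from \eqref{update}, \eqref{nonrho} and Lemma \ref{WellDe}, then peeling the newest iterate off the maximum in \eqref{Ck}, then boundedness via Assumption \ref{assumption1} and Lemma \ref{level set}. The only difference is presentational: the paper splits into the regimes $k<M$ (where it notes $f(x_{l(k)})=f(x_{1})$) and $k\ge M$, whereas your observation $n(k+1)\le n(k)+1$ handles both regimes uniformly, which is slightly cleaner but not a different argument.
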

\begin{proof}
	By Algorithm \ref{alg:TRBB} and Lemma \ref{WellDe}, we have 
	\begin{equation}\label{fr}
	f(x_{l(k)})\ge f(x_{k+1})+\eta_{1}\text{Pred}_{k}\ge f(x_{k+1})\quad\text{for all}\quad k\ge 1.
	\end{equation}
	We consider the following two cases.
	
	If $k<M$, then $n(k)=\min\{M, k\}=k$, and 
	$$f(x_{l(k)})=\max_{0\le j\le n(k)}\{f(x_{k-j})\}=f(x_{1}).$$
	
	If $k\ge M$, then $n(k)=\min\{M, k\}=M$, $n(k+1)=\min\{M, k+1\}=M$. By the definition of $f(x_{l(k)})$ in \eqref{Ck} and \eqref{fr}, we have
	\begin{equation*}
	\begin{split}
	f(x_{l(k+1)})=\max_{0\le j\le n(k+1)}\{f(x_{k+1-j})\}&=\max\Big\{\max_{0\le j\le n(k)-1}\{f(x_{k-j})\},  f(x_{k+1})\Big\}\\
	&\le\max\{f(x_{l(k)}), f(x_{k+1})\}\le f(x_{l(k)}).
	\end{split}
	\end{equation*}
	Therefore, the sequence $\{f_{l(k)}\}$ is monotonic non-increasing. It follows from Assumption \ref{assumption1} and Lemma \ref{level set} that $\{f(x_{l(k)})\}$ is bounded, which implies that $\{f(x_{l(k)})\}$ is convergent.
\end{proof}
%\begin{proof}
%It follows from the definition $\rho_{k}$ in \eqref{nonrho}, the update rule \eqref{update} and Lemma \ref{WellDe} that we have 
%\begin{equation}\label{a}
%	f_{k}^{r}\ge\eta_{1}\text{Pred}_{k}+f(x_{k+1})\ge f(x_{k+1}).
%\end{equation}
%Combining the definition of $f_{k}^{r}$ and \eqref{a}, we have 
%\begin{equation}\label{b}
%	f_{k+1}^{r}\le f_{k}^{r}.
%\end{equation}
%Using the definition of $f_{k}^{r}$ again and \eqref{b}, we obtain
%\begin{equation}
%	f_{k+1}^{r}\ge f(x_{k+1}).
%\end{equation} 
%We complete the proof.
%\end{proof}

\begin{theorem}
	Let $\{x_{k}\}$ be an infinite sequence generated by Algorithm \ref{alg:TRBB}. Then we have 
	\begin{equation}\label{liminf}
	\liminf_{k\rightarrow\infty} \|g_{k}\|=0.
	\end{equation}
\end{theorem}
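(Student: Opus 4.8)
The plan is to argue by contradiction, so suppose \eqref{liminf} is false. Then there exist $\epsilon>0$ and an index $K$ with $\|g_k\|\ge\epsilon$ for every $k\ge K$. Because the algorithm confines $1/\alpha_k^{RBB}$ to $[t_{\min},t_{\max}]$, we have $1/\alpha_k^{RBB}\ge t_{\min}$ and hence $\|g_k\|/\alpha_k^{RBB}\ge t_{\min}\epsilon$; substituting this into the bound \eqref{WellDefine} of Lemma~\ref{WellDe} yields
\[
\text{Pred}_k\ \ge\ \tfrac{1}{2}\|g_k\|\min\Big\{\Delta_k,\ \tfrac{\|g_k\|}{\alpha_k^{RBB}}\Big\}\ \ge\ \tfrac{\epsilon}{2}\min\{\Delta_k,\ t_{\min}\epsilon\},\qquad k\ge K.
\]
The whole contradiction will be produced by matching this $\epsilon$-uniform predicted reduction against the convergence of the reference values $\{f(x_{l(k)})\}$ from Lemma~\ref{monotone}.

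First I would show that the trust-region radius cannot shrink to zero. Repeating the estimate in the proof of Lemma~\ref{inner iteration}, the identity of Lemma~\ref{residual} together with the bound above and the boundedness of the Hessian on the level set $L(x_1)$ (Assumption~\ref{assumption1}) give, whenever $\Delta_k$ is small enough that the minimum above equals $\Delta_k$,
\[
\frac{f(x_k)-f(x_k+s_k)}{\text{Pred}_k}\ =\ 1-\frac{\mathcal{O}(\|s_k\|^2)}{\text{Pred}_k}\ \ge\ 1-\mathcal{O}(\Delta_k);
\]
since $f(x_{l(k)})\ge f(x_k)$ in \eqref{nonrho} makes $\rho_k$ no smaller than the left-hand side, there is a threshold $\bar\Delta>0$ with $\Delta_k\le\bar\Delta\Rightarrow\rho_k\ge\eta_2$, i.e.\ a very successful step. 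As the radius is contracted only on failed iterations and then by a factor at least $\alpha_4$ (see \eqref{refined radius}), while a failed iteration necessarily has $\Delta_k>\bar\Delta$, the radius obeys $\Delta_k\ge\hat\Delta:=\min\{\Delta_K,\alpha_4\bar\Delta\}>0$ for all $k\ge K$. Feeding this back into the first display gives a uniform decrease $\text{Pred}_k\ge c:=\tfrac{\epsilon}{2}\min\{\hat\Delta,t_{\min}\epsilon\}>0$ at every accepted step.

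It remains to contradict this uniform decrease. Writing $\phi_k:=f(x_{l(k)})$, Lemma~\ref{monotone} ensures that $\{\phi_k\}$ is nonincreasing and converges to a limit $\phi^{*}$. Evaluating the acceptance inequality \eqref{fr} at the index $l(k)-1$ gives
\[
\eta_1\,\text{Pred}_{l(k)-1}\ \le\ \phi_{l(k)-1}-\phi_{k}.
\]
Since $l(k)\ge k-M\to\infty$, the quantity $\phi_{l(k)-1}$ is a subsequence of the convergent sequence $\{\phi_k\}$ and hence also tends to $\phi^{*}$; the right-hand side therefore tends to $0$, forcing $\text{Pred}_{l(k)-1}\to 0$. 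This contradicts $\text{Pred}_{l(k)-1}\ge c>0$, and the contradiction establishes \eqref{liminf}.

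The step I expect to be the main obstacle is this last one: extracting a genuinely vanishing predicted reduction from the convergence of the running-maximum reference $\{f(x_{l(k)})\}$. The delicate point is the bookkeeping forced by the window of length $M+1$ in \eqref{Ck}: one must verify that the index $l(k)-1$ at which \eqref{fr} is applied does tend to infinity and corresponds to an accepted step (tracing back, if necessary, through the finitely many internal-circulation iterations guaranteed by Lemma~\ref{inner iteration}), so that both reference terms legitimately converge to $\phi^{*}$. The radius and model-fidelity estimates, by contrast, are routine adaptations of the computation already carried out in Lemma~\ref{inner iteration}.
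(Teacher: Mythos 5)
Your proof is correct (at the same level of rigor as the paper's own lemmas) but reaches the contradiction by a genuinely different route. The paper first uses the convergence of $\{f(x_{l(k)})\}$ (Lemma \ref{monotone}) to extract a subsequence along which $\Delta_{l(k)}\to 0$, and only then invokes the model-fidelity estimates (Lemmas \ref{residual} and \ref{WellDe}) on the larger, previously rejected inner-cycle step at those iterations, concluding it ought to have been accepted --- contradicting its rejection. You compose the same two ingredients in the opposite order: fidelity first, to prove the radius is uniformly bounded below (small $\Delta_k$ forces $\rho_k\ge\eta_2$, and \eqref{refined radius} contracts the radius only on failed steps, so $\Delta_k\ge\min\{\Delta_K,\alpha_4\bar\Delta\}>0$), hence $\text{Pred}_k\ge c>0$ uniformly; then the acceptance inequality \eqref{fr}, applied at index $l(k)-1$, gives $\eta_1\,\text{Pred}_{l(k)-1}\le f(x_{l(l(k)-1)})-f(x_{l(k)})\to 0$, a contradiction. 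Two observations on what each route buys. First, your application of \eqref{fr} at index $l(k)-1$ is the classical Grippo--Lampariello--Lucidi device and is actually tighter than the paper's corresponding step: the paper needs $f(x_{l(l(k))})-f(x_{l(k)+1})\to 0$ along a subsequence, but $f(x_{l(k)+1})$ is not itself a reference value, so its convergence to the common limit is not immediate; in your version both terms are reference values, so the limit is clean, and you also avoid the paper's bookkeeping about the rejected step of radius $\Delta_{l(k)}/\eta_{1}$ and the ``without loss of generality, more than one inner cycle'' assumption. Second, the caveat you flag --- that \eqref{fr} presupposes iteration $l(k)-1$ was an accepted step --- is real but is precisely the convention the paper itself adopts (it asserts \eqref{fr} for all $k\ge1$ in Lemma \ref{monotone}, i.e., the index $k$ counts accepted iterates, the finitely many failed trials guaranteed by Lemma \ref{inner iteration} being hidden in the internal circulation), so your argument stands on the same footing as the paper's; likewise, your appeal to a uniform constant in the $\mathcal{O}(\|s_k\|^2)$ term of Lemma \ref{residual} matches the paper's implicit use of it.
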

\begin{proof}
	We prove \eqref{liminf} by way of contradiction. Assume that there exists a positive constant $\epsilon_{0}$ such that 
	\begin{equation}\label{eps}
	\|g_{k}\|\ge\epsilon_{0} \quad\text{for all}\quad k\ge 1.
	\end{equation}
	It follows from \eqref{update} in Algorithm \ref{alg:TRBB}, \eqref{nonrho} and Lemma \ref{monotone} that
	\begin{equation}\label{d}
	f(x_{k+1})\le f(x_{l(k)})-\eta_{1}\text{Pred}_{k}\le f(x_{l(k)})-\frac{1}{2}\eta_{1}\|g_{k}\|\min\{\Delta_{k}, \frac{\|g_{k}\|}{\alpha_{k}^{RBB}}\}.
	\end{equation} 
	Combining \eqref{eps} and \eqref{d}, we have  
	\begin{equation}\label{lk}
	\begin{split}
	f(x_{l(l(k))})-f(x_{l(k)+1})&\ge \eta_{1}\text{Pred}_{l(k)}\\
	&\ge\frac{1}{2}\eta_{1}\|g_{l(k)}\|\min\{\Delta_{l(k)}, \frac{\|g_{l(k)}\|}{\alpha_{l(k)}^{RBB}}\}\\
	&\ge\frac{1}{2}\eta_{1}\epsilon_{0}\min\{\Delta_{l(k)}, \frac{\epsilon_{0}}{\alpha_{max}}\}
	\end{split}
	\end{equation}
	Because $\{f(x_{l(k)})\}$ is convergent, the above inequality implies that there exists an infinite index set $\mathcal{S}$ such that 
	\begin{equation}\label{Deltalk}
	\lim\limits_{l(k)\rightarrow\infty,l(k)\in\mathcal{S}}\Delta_{l(k)}=0.
	\end{equation}
	
	Without loss of generality, we can assume that for all $l(k)\in\mathcal{S}$ there are more than one inner cycle performed at the $l(k)$-th iterate.	
	%	Combining the definition of $f_{l(k)}$ in \eqref{Ck} and \eqref{c}, we have
	%	\begin{equation}
	%		\begin{split}
	%			f_{k+1}^{r}&=\frac{l_{k}Q_{k}f_{k}^{r}+f(x_{k+1})}{Q_{k+1}}\\
	%			&\le \frac{l_{k}Q_{k}f_{k}^{r}+f_{k}^{r}-\frac{1}{2}\eta_{1}\|g_{k}\|\min\{\Delta_{k}, \frac{\|g_{k}\|}{\alpha_{k}^{RBB}}\}}{Q_{k+1}}\\
	%			&=f_{k}^{r}-\frac{\frac{1}{2}\eta_{1}\|g_{k}\|\min\{\Delta_{k}, \frac{\|g_{k}\|}{\alpha_{max}}\}}{Q_{k+1}},
	%		\end{split}
	%	\end{equation}
	%	which together with \eqref{eps} implies that
	%	\begin{equation}\label{frk}
	%		f_{k}^{r}-f_{k+1}^{r}\ge\frac{\frac{1}{2}\eta_{1}\epsilon_{0}\min\{\Delta_{k},\frac{\epsilon_{0}}{\alpha_{max}}\}}{Q_{k+1}}.
	%	\end{equation}
	%Then, it follows from the assumption $A$, Lemma \ref{monotone} and $\{f_{k}^{r}\}$ is monotonically non-increasing that $\{f_{k}^{r}\}$	is convergent. It follows from \eqref{frk} that
	%\begin{equation}\label{infity}
	%	\sum_{k=0}^{\infty}\frac{\min\{\Delta_{k},\frac{\epsilon_{0}}{\alpha_{max}}\}}{Q_{k+1}}<\infty.
	%\end{equation}
	%By \eqref{Ck} and $l_{k}\in[0,1]$, we have 
	%\begin{equation}\label{QK}
	%	Q_{k+1}=1+\sum_{i=0}^{k}\prod_{m=0}^{i}l_{k-m}\le k+2.
	%\end{equation}
	%Combining \eqref{frk} and \eqref{QK}, we have 
	%\begin{equation}\label{sum}
	%	\sum_{k=0}^{\infty}\frac{\min\{\Delta_{k},\frac{\epsilon_{0}}{\alpha_{max}}\}}{k+2}<\infty.
	%\end{equation}
	%Because $\sum_{k=0}^{\infty}\frac{1}{k+2}$ is divergent, the above inequality implies that there exists an infinite index set $\mathcal{S}$ such that
	%\begin{equation}\label{Delta}
	%	\lim\limits_{k\rightarrow\infty,k\in\mathcal{S}}\Delta_{k}=0.
	%\end{equation}
	So, the solution $s_{l(k)}$ of the following subproblem
	\begin{equation*}
	\begin{aligned}	
	&\min_{s\in\mathbb{R}^{n}}   m(x_{l(k)}+s)=f(x_{l(k)})+g_{l(k)}^{\T}s+\frac{1}{2}\alpha_{l(k)}^{RBB}s^{\T}s \\
	&\text{ s.t. } \ \|s\|\le\frac{\Delta_{l(k)}}{\eta_{1}},\quad k\in\mathcal{S},
	\end{aligned}
	\end{equation*} 
	is not accepted at the $l(k)$-th iterate, which means 
	\begin{equation}\label{re}
	\rho_{s_{l(k)}}=\frac{f(x_{l(l(k))})-f(x_{l(k)}+s_{l(k)})}{m(x_{l(k)})-m(x_{l(k)}+s_{l(k)})}<\eta_{1}\quad \text{for all}\quad l(k)\in\mathcal{S}.
	\end{equation}
	On the other hand, from \eqref{eps}, \eqref{Deltalk}, Lemmas \ref{residual} and \ref{WellDe}, we have 
	\begin{equation}
	\begin{split}
	\Big|\frac{f(x_{l(k)})-f(x_{l(k)}+s_{l(k)})}{m(x_{l(k)})-m(x_{l(k)}+s_{l(k)})}-1\Big|&=\frac{\mathcal{O}(\|s_{l(k)}\|^2)}{m(x_{l(k)})-m(x_{l(k)}+s_{l(k)})}\\
	&\le\frac{\mathcal{O}(\|s_{l(k)}\|^2)}{\frac{1}{2}\|g_{l(k)}\|\min\{\frac{\Delta_{l(k)}}{\eta_{1}},\frac{\|g_{l(k)}\|}{\alpha_{max}}\}}\\
	&\le\frac{\mathcal{O}(\frac{\Delta_{l(k)}}{\eta_{1}})}{\frac{1}{2}\|g_{l(k)}\|}\rightarrow 0,\quad l(k)\in\mathcal{S},
	\end{split}
	\end{equation}	
	which implies 
	\begin{equation}\label{e}
	\lim_{l(k)\to \infty,l(k)\in\mathcal{S}}\frac{f(x_{l(k)})-f(x_{l(k)}+s_{l(k)})}{m(x_{l(k)})-m(x_{l(k)}+s_{l(k)})}=1.
	\end{equation}
	Combining \eqref{e} and Lemma \ref{monotone}, we have
	\begin{equation*}
	\rho_{s_{l(k)}}=\frac{f(x_{l(l(k))})-f(x_{l(k)}+s_{l(k)})}{m(x_{k})-m(x_{l(k)}+s_{l(k)})}\ge\frac{f(x_{l(k)})-f(x_{l(k)}+s_{l(k)})}{m(x_{l(k)})-m(x_{l(k)}+s_{l(k)})},
	\end{equation*}
	which implies $\rho_{s_{l(k)}}\ge 1\ge\eta_{1}$ for all sufficiently large $l(k)\in\mathcal{S}$, which contradicts \eqref{re}. We complete the proof.
\end{proof}

\section{Numerical experiments}
In this section, the ideas introduced in Section \ref{TRBB} are first illustrated on two classical functions. The impact of the modified trust region radius update rule and the selection of regularization parameters on the robustness and efficiency of the Algorithm \ref{alg:TRBB} are then evaluated on 76 large-scale unconstrained optimization problems from CUTE \cite{Gould2015CUTEstConstrainedUnconstrained}. Finally, we apply the Algorithm \ref{alg:TRBB} to find spherical $t$-designs, which is a challenging numerical computation problem. All numerical experiments performed using \text{Matlab} R2019a under a Windows 11 operating system on a DELL G15 5200 personal computer with an Intel Core i7-12700H and 16 GB of RAM.

For all our tests, the parameters in Algorithm \ref{alg:TRBB} are as follows: $\Delta_{0}=1$, $\varrho=3$, $\alpha_{4}=0.25$, $\alpha_{1}=0.5$, $\alpha_{3}=1.5$, $\alpha_{2}=2$, $\eta_{1}=0.1$, $\eta_{2}=0.75$, $\eta_{3}=1.5$, $\eta_{4}=0.001$, $t_{min}=10^{-10}$, $t_{max}=10^{10}$. All together, the following four algorithms are tested and compared, \textbf{GBB}: BB method with non-monotone line search \cite{Raydan1997BarzilaiBorweinGradienta}, \textbf{RBBTR}: Algorithm \ref{alg:TRBB} with regularization parameter \eqref{parameters}, \textbf{RBBTRe}: Algorithm \ref{alg:TRBB} with regularization parameter \eqref{exp}, \textbf{BBTR}: Algorithm \ref{alg:TRBB} with $\alpha_{k}^{BB1}$ \eqref{BBlong}.
%		\begin{itemize}
%			\item[(1)] \text{GBB:} BB method with non-monotone line search \cite{Raydan1997BarzilaiBorweinGradienta}.
%			\item[(2)] \text{RBBTR:} Algorithm \ref{alg:TRBB}.
%			\item[(3)] \text{RBBTR*:} Algorithm \ref{alg:TRBB} without ``\textit{too failed}" case in $\eqref{refined radius}$ and $\eqref{parameters}$.
%			\item[(4)] \text{BBTR:} Algorithm \ref{alg:TRBB} with $H_{k}=\alpha_{k}^{BB1}$ using \eqref{BBlong}.
%			\item[(5)] \text{BBTR*:} Algorithm \text{BBTR} without ``\textit{too failed}" case $\eqref{refined radius}$. 
%		\end{itemize}   
For consistency, all tested algorithms employ the same non-monotonic technique with $M=20$ in \eqref{Ck}, the same initial step $t_{1}=\frac{1}{\|g_{1}\|_{\infty}}$ and the same step interval $[t_{min}, t_{max}]$. The stopping condition for all the testing algorithms is $\|g_{k}\|\le10^{-6}(1+|f(x_{k})|)$. In addition, the algorithm is stopped if the number of iterations exceeds $20,000$. In such a case, we claim fail of this algorithm.  
\subsection{Test on two functions} 
In this part, we denote the \text{RBBTR} algorithm that does not include the \textit{too failed} case the  $\text{RBBTR}^{*}$. Similarly, we have  $\text{RBBTRe}^{*}$ and $\text{BBTR}^{*}$ algorithms. The ideas introduced in Section \ref{TRBB} are verified by comparing the performance of these algorithms on two classic functions, which are Extended White \& Holst function \eqref{WhiteHolst} and perturbed Tridiagonal Quadratic function \eqref{TridigonalQua} (see Andrei  \cite{Andrei2008UnconstrainedOptimizationTest}).
\begin{equation}\label{WhiteHolst}
f(x)=\sum_{i=1}^{n/2}\big(c(x_{2i}-x_{2i-1}^{3})^{2}+(1-x_{2i-1})^{2}\big),
\end{equation}
this function has a deep, curved valley following the cubic curve  $x_{2i}=x_{2i-1}^{3}$ for $i=1,\ldots,n/2$, and its minimizer is $x=(1,\ldots,1)^{\T}$. Here, we set $c=10^{4}$ and  $x_{0}=(-1.2,1\ldots,-1.2,1)^{\T}$.
\begin{equation}\label{TridigonalQua}
f(x)=x_{1}^2+\sum_{i=2}^{n-1}\big(ix_{i}^2+(x_{i-1}+x_{i}+x_{i+1})^2\big),
\end{equation}
the contour surface of this function is a hyper-ellipsoid, the condition number of this problem increases as $n$ increases, and its minimizer is $x=(0,\ldots,0)^{\T}$. The initial point $x_{0}=(0.5,\ldots,0.5)^{\T}$. We set $n=5000$ for the both problems. 
\begin{figure}[h!]
	\centering
	\subfigure[Extended White \& Holst function]{\includegraphics[width=0.5\linewidth]{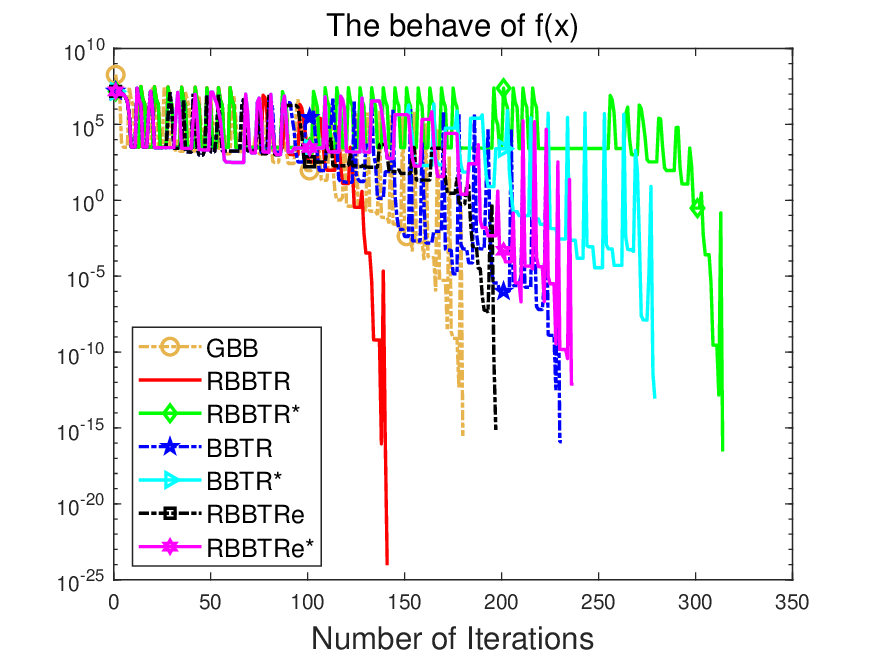}}\hspace{-8pt}
	\subfigure[Perturbed Tridiagonal Quadratic function]{\includegraphics[width=0.5\linewidth]{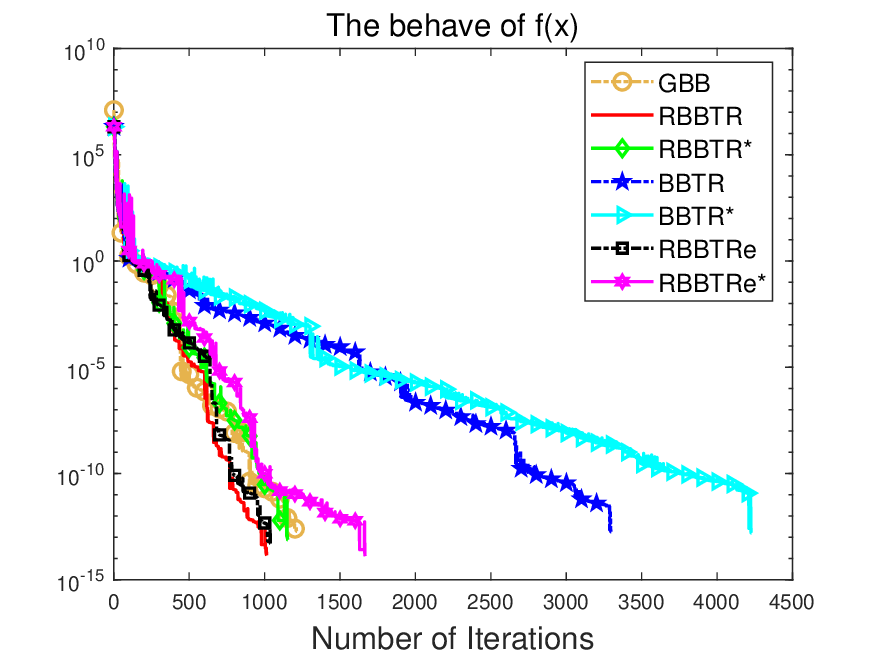}}
	\caption{\textit{Performance comparison for GBB, RBBTR, $\text{RBBTR}^{*}$, BBTR, $\text{BBTR}^{*}$, RBBTRe and $\text{RBBTRe}^{*}$ algorithms on functions \eqref{WhiteHolst} and \eqref{TridigonalQua}.}}
	\label{fig:WhiteAndPTQ}
\end{figure}

Figure \ref{fig:WhiteAndPTQ} shows that the number of iterations may be reduced by adding ``\textit{too failed}" case in trust-region framework, which improves the efficiency of algorithm. Meanwhile, under the framework of trust-region algorithm, the RBB step-size with regularization parameters related to the trust-region radius is more efficient than the of BB step-size. Meanwhile, in the Extended White \& Holst function, we can see that it is important to add the ``too failed" case for the \text{RBBTR} algorithm.  

\subsection{Performance on a test collection} 
We compare these different algorithms for the $76$ twice-continuously differentiable large-scale unconstrained optimization problems from \cite{Andrei2008UnconstrainedOptimizationTest}. Figure \ref{fig:Performance} presents performance profiles \cite{Dolan2002Benchmarkingoptimizationsoftware} obtained by RBBTR, RBBTRe and GBB, BBTR algorithms using CPU time and obtained gradient norm as metrics. For each algorithm, the vertical axis of the figure shows the percentage of problems the algorithm solves within the factor $\omega$ of the minimum value of the metric. It can be seen that RBBTRe performs better than RBBTR, BBTR and GBB.
\begin{figure}[h!]
	\centering
	\subfigure[CPU time]{\includegraphics[width=0.5\linewidth]{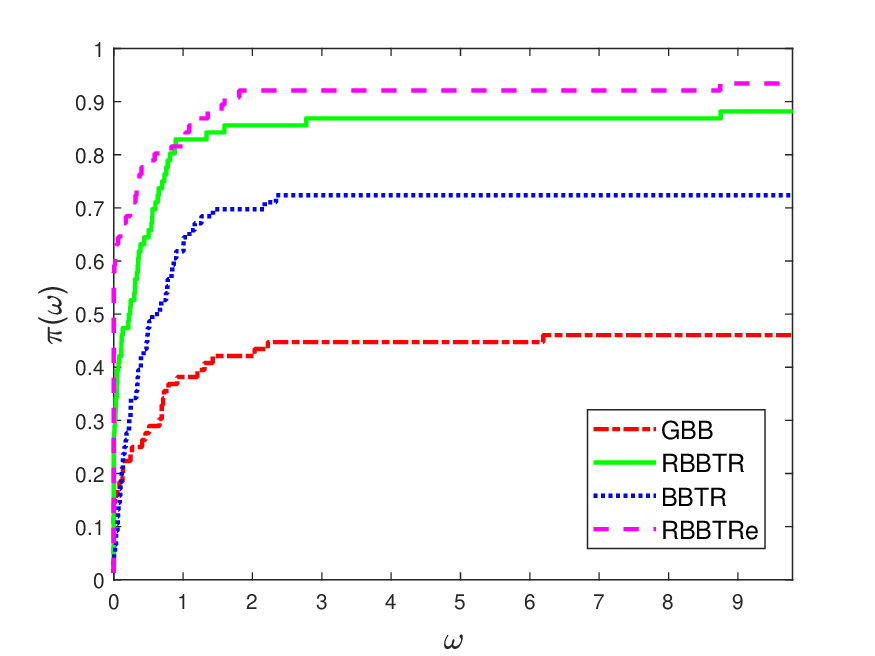}}\hspace{-8pt}
	\subfigure[Obtained gradient norm]{\includegraphics[width=0.5\linewidth]{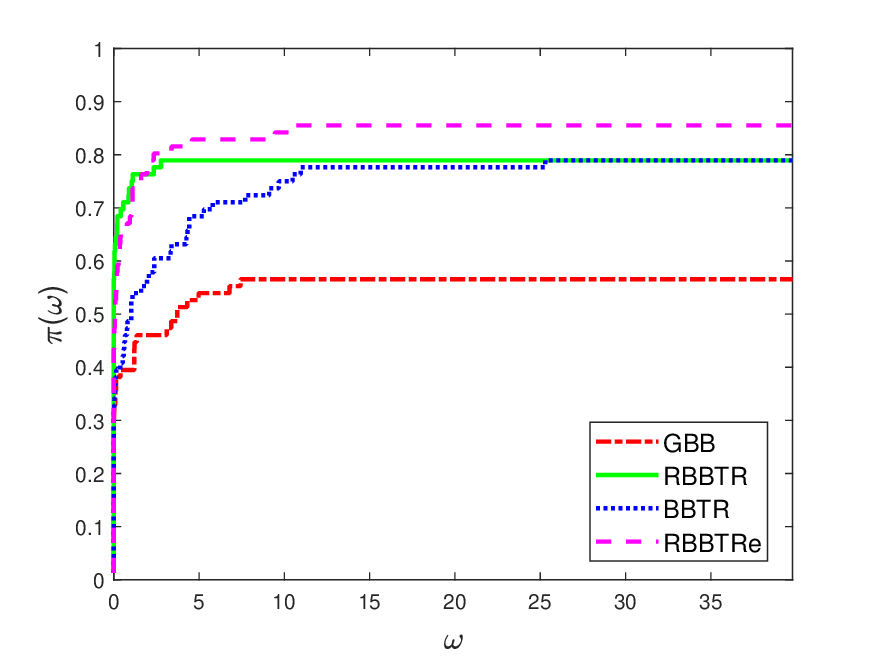}}
	\caption{\textit{Performance profiles of GBB, RBBTR, BBTR and RBBTRe algorithms on 76 unconstrained problems from \cite{Andrei2008UnconstrainedOptimizationTest}, CPU time (left) and obtained gradient norm (right) metrics.}}
	\label{fig:Performance}
\end{figure}

\subsection{Finding spherical $t$-designs efficiently}
In this part, we focus on the problem of finding a spherical $t$-design on $\mathbb{S}^{2}:=\{(x,y,z)^{\T}\in\mathbb{R}^{3}|x^2+y^2+z^2=1\}$, which was introduced by Delsarte et al.  \cite{Delsarte1991Sphericalcodesdesigns} in 1977.

A point set $\mathcal{X}=\{\mathtt{x}_{1},\ldots,\mathtt{x}_{N}\}\subset\mathbb{S}^{2}$ is a spherical $t$-design if it satisfies
\begin{equation*}
\frac{1}{N}\sum_{j=1}^{N}p(\mathtt{x}_{j})=\frac{1}{4\pi}\int_{\mathbb{S}^{2}}p(\mathtt{x})d\omega(\mathtt{x})\quad\quad \forall p\in\mathbb{P}_{t},
\end{equation*}
where $d\omega(\mathtt{x})$ denotes area measure on the unit sphere, $\mathbb{P}_{t}$ is the spherical polynomials with degree $t$. For $t\ge 1$, the existence of a spherical $t$-design was proved in \cite{Seymour1984Averagingsetsgeneralization}. Commonly, the interest is in the smallest number $N_{t}^{*}$ of points required to give a spherical $t$-design. A lot of important works on spherical $t$-design have been done \cite{Maier1999distributionpointssphere,Chen2006ExistenceSolutionsSystems,Chen2010Computationalexistenceproofs,Congpei2010WellConditionedSpherical}.  

In the influential paper  \cite{Sloan2009variationalcharacterisationspherical}, the authors proved that $\mathcal{X}$ is a spherical $t$-design if and only if 
\begin{equation}\label{variational}
A_{N,t}(\mathcal{X})=\frac{4\pi}{N^2}\sum_{n=1}^{t}\sum_{k=1}^{2n+1}(\sum_{i=1}^{N}Y_{n}^{k}(\mathtt{x}_{i}))^{2}=\frac{4\pi}{N^2}\sum_{i=1}^{N}\sum_{j=1}^{N}\sum_{n=1}^{t}\frac{2n+1}{4\pi}P_n(\mathtt{x}_{i}^{\T}\mathtt{x}_{j})=0,
\end{equation}
with $\{Y_{0}^{1},Y_{1}^{1},\ldots, Y_{t}^{2t+1}\}$ for degree $n=0,1,\ldots,t$ and order $k=1,\ldots,2n+1$ is a complete set of orthonormal real spherical harmonics basis for $\mathbb{P}_{t}$, $P_{n}$ the Legendre polynomial of degree $n$ normalized to $P_{n}(1)=1$. Therefore, the problem of finding a spherical $t$-design is expressed as solving a nonlinear and non-convex optimization problem
\begin{equation}\label{tDesign}
\min \  \{f(\mathcal{X}):=A_{N,t}(\mathcal{X})\}\quad\text{s.t.}\quad {\mathcal{X}\subset{\mathbb{S}^{2}}}.	
\end{equation}

Chen et al. \cite{Chen2010Computationalexistenceproofs} proved that spherical $t$-designs with $N=(t+1)^{2}$ points exist for all degrees $t$ up to 100 on $\mathbb{S}^{2}$. An et al. \cite{An2020Numericalconstructionspherical} numerically constructed spherical $t$-designs by solving \eqref{tDesign} using BB method with \text{Armijo-Goldstein} rule, we denote it \textbf{AGBB} method. And they numerically found the spherical $t$-designs $\mathcal{X}$ with $N=(t+1)^{2}$ up to $t=127$. 

Let the basis matrix be 
$$\mathbf{Y}_{t}^{0}(\mathcal{X})=
\begin{pmatrix}
\frac{1}{\sqrt{4\pi}} & \cdots & \frac{1}{\sqrt{4\pi}} \\
Y_{1}^{1}(\mathtt{x}_{1})& \cdots & Y_{1}^{1}(\mathtt{x}_{N})\\
\vdots & \ddots & \vdots \\
Y_{t}^{2t+1}(\mathtt{x}_{1}) & \cdots & Y_{t}^{2t+1}(\mathtt{x}_{N})
\end{pmatrix}
\in\mathbb{R}^{(t+1)^{2}\times N}.$$
From the result in \cite{An2020Numericalconstructionspherical}, we know that if  $\mathcal{X}\subset\mathbb{S}^{2}$ is a stationary point set of $A_{N,t}$ and the minimal singular value of basis matrix $\mathbf{Y}_{t}^{0}(\mathcal{X})$ is positive, then $\mathcal{X}$ is a spherical $t$-design. This result is used to check whether the obtained stationary point set is a spherical $t$-design.

Based on the code in \cite{An2020Numericalconstructionspherical}, we perform numerical experiments of RBBTR and RBBTRe algorithms. The termination condition of the algorithms is as follows
\begin{equation*}
\|g_{k}\|_{2}<\varepsilon_{1}\|g_{1}\|\ \text{or}\  |f_{k}-f_{k+1}|\le\varepsilon_{2} \ \text{or}\  \|\mathcal{X}_{k}-\mathcal{X}_{k+1}\|\le\varepsilon_{2}.
\end{equation*}
Here we set $\varepsilon_{1}=10^{-8}$, $\varepsilon_{2}=10^{-16}$. Initial step-size $t_{1}=1$. The number of maximum iterations and function evaluations are $10^{4}$ and one million. The parameters in Algorithm \ref{alg:TRBB} are consistent with those in the previous subsection. In the AGBB method, the parameters in \cite{An2020Numericalconstructionspherical} are used by default. The initial points in this experiment are consistent with those in \cite{An2020Numericalconstructionspherical}. We report in Table \ref{tab:spherical} the CPU time,  number of iterations, gradient norms $\|\nabla^{*}A_{N,t}(\mathcal{X})\|$, function values $A_{N,t}^{*}(\mathcal{X})$, and the minimal singular value $\sigma$ of basis matrix $\mathbf{Y}_{t}^{0}(\mathcal{X})$ for the AGBB, RBBTR and RBBTRe algorithms when the termination conditions are met for different degree $t$. 
\begin{table*}[h!]
	\centering
	%\footnotesize
	\large 
	\renewcommand{\arraystretch}{1.2}
	\caption{Performance of AGBB, RBBTR and RBBTRe algorithms on finding  spherical $t$-designs.}
	\resizebox{\textwidth}{!}{
		\begin{tabular}{|cc|ccccc|ccccc|ccccc|}
			\hline
			\multicolumn{2}{|c}{Problem} & \multicolumn{5}{c}{AGBB}              & \multicolumn{5}{c}{RBBTR}            & \multicolumn{5}{c|}{RBBTRe} \\
			\hline
			t   & N     & Time  & \text{Iter}  & $\|\nabla A_{N,t}^{*}(\mathcal{X})\|$ & $A_{N,t}^{*}(\mathcal{X})$  & min($\sigma$) & Time  & \text{Iter}  & $\|\nabla A_{N,t}^{*}(\mathcal{X})\|$ & $A_{N,t}^{*}(\mathcal{X})$  & min($\sigma$) & Time  & \text{Iter}  & $\|\nabla A_{N,t}^{*}(\mathcal{X})\|$ & $A_{N,t}^{*}(\mathcal{X})$  & min($\sigma$) \\
			\hline
			10    & 121   & 0.154  & 100   & 8.87E-09 & 1.28E-15 & 1.3270  & 0.177  & 110   & 3.53E-09 & 2.01E-16 & 1.3260  & 0.215  & 138   & 3.15E-09 & 4.43E-16 & 1.3260  \\
			15    & 169   & 0.595  & 132   & 7.01E-09 & 2.03E-15 & 1.5323  & 0.844  & 189   & 8.00E-08 & 2.04E-16 & 1.5319  & 0.748  & 169   & 2.87E-08 & 4.77E-14 & 1.5319  \\
			20    & 441   & 2.852  & 253   & 7.58E-09 & 6.56E-16 & 1.7983  & 2.708  & 265   & 9.50E-09 & 2.27E-15 & 1.7990  & 2.165  & 210   & 1.05E-07 & 3.42E-14 & 1.7990  \\
			25    & 676   & 5.316  & 241   & 2.64E-08 & 1.00E-13 & 1.9018  & 5.575  & 285   & 3.58E-09 & 5.34E-15 & 1.9013  & 6.371  & 324   & 1.96E-08 & 6.76E-14 & 1.9013  \\
			30    & 961   & 11.480  & 274   & 1.11E-08 & 3.05E-14 & 1.9448  & 10.185  & 280   & 4.24E-08 & 6.64E-14 & 1.9456  & 12.233  & 340   & 1.17E-08 & 1.80E-14 & 1.9455  \\
			35    & 1296  & 47.154  & 437   & 6.01E-08 & 1.11E-12 & 2.0682  & 22.026  & 332   & 9.83E-09 & 1.48E-14 & 2.0691  & 33.575  & 390   & 1.58E-08 & 1.03E-14 & 2.0691  \\
			40    & 1681  & 185.847  & 741   & 1.62E-07 & 1.03E-11 & 2.0540  & 97.996  & 399   & 9.31E-09 & 1.83E-15 & 2.0536  & 114.021  & 462   & 2.04E-09 & 1.06E-14 & 2.0536  \\
			45    & 2116  & 388.226  & 730   & 6.28E-07 & 1.75E-10 & 2.1196  & 145.692  & 407   & 6.79E-08 & 2.00E-13 & 2.1197  & 134.339  & 344   & 2.53E-08 & 2.65E-13 & 2.1197  \\
			50    & 2601  & 249.369  & 375   & 1.61E-08 & 1.14E-13 & 2.3393  & 248.470  & 405   & 5.75E-08 & 3.03E-13 & 2.3385  & 293.445  & 483   & 2.19E-08 & 2.30E-13 & 2.3385  \\
			55    & 3136  & 480.553  & 444   & 3.09E-08 & 6.22E-13 & 2.4334  & 403.302  & 404   & 3.88E-08 & 5.71E-13 & 2.4345  & 504.808  & 533   & 9.32E-09 & 1.46E-14 & 2.4345  \\
			60    & 3721  & 639.888  & 390   & 8.49E-08 & 6.26E-12 & 2.3639  & 541.773  & 370   & 7.17E-08 & 4.30E-12 & 2.3642  & 649.936  & 453   & 4.29E-08 & 1.09E-12 & 2.3640  \\
			65    & 4356  & 1634.305  & 673   & 1.48E-08 & 6.14E-14 & 2.1894  & 1346.564  & 579   & 1.47E-08 & 1.29E-13 & 2.1902  & 1385.253  & 570   & 1.36E-08 & 1.29E-13 & 2.1902  \\
			70    & 5041  & 2021.140  & 598   & 2.78E-08 & 3.76E-13 & 2.1090  & 1534.415  & 478   & 6.21E-08 & 3.54E-12 & 2.1084  & 1705.671  & 489   & 2.78E-08 & 6.05E-13 & 2.1084  \\
			75    & 5776  & 2517.164  & 568   & 9.79E-09 & 5.28E-15 & 2.2955  & 2049.604  & 491   & 9.31E-08 & 4.27E-12 & 2.2973  & 2074.003  & 517   & 6.04E-08 & 2.80E-12 & 2.2973  \\
			80    & 6561  & 6138.994  & 802   & 1.73E-07 & 3.99E-11 & 2.3504  & 2706.260  & 500   & 9.60E-08 & 1.20E-11 & 2.3499  & 2686.841  & 496   & 9.97E-08 & 1.39E-11 & 2.3499  \\
			84    & 7225  & 4804.699  & 605   & 1.20E-07 & 8.41E-13 & 1.7980  & 3784.836  & 556   & 2.75E-08 & 1.59E-13 & 1.7989  & 5609.035  & 702   & 3.56E-08 & 1.46E-12 & 1.7989  \\
			88    & 7921  & 8294.013  & 870   & 6.70E-07 & 2.24E-11 & 2.1242  & 5024.556  & 590   & 3.64E-08 & 1.54E-12 & 2.1229  & 5104.978  & 622   & 7.78E-08 & 8.35E-12 & 2.1229  \\
			96    & 9409  & 13817.921  & 863   & 1.87E-08 & 5.99E-13 & 2.1647  & 9933.572  & 734   & 3.16E-08 & 1.52E-13 & 2.1650  & 9145.463  & 673   & 4.79E-08 & 4.84E-12 & 2.1650  \\
			127   & 16384 & 52940.230  & 802   & 6.21E-07 & 1.28E-09 & 1.9674  & 61771.828  & 991   & 1.36E-07 & 3.03E-14 & 1.9668  & 49783.287  & 790   & 1.41E-07 & 7.70E-11 & 1.9669  \\
			\hline
	\end{tabular}}
	\label{tab:spherical}
\end{table*}

From the results in Table \ref{tab:spherical}, we can see that for small $t$, \text{AGBB} has an advantage in CPU time, but as the degree $t$ gradually increases, \text{RBBTR} and \text{RBBTRe} show significant advantages. Meanwhile, the minimal singular value of the basis matrix corresponding to the stationary point set obtained by the algorithms is positive for different $t$, which verifies the effectiveness of the three algorithms.

Figure \ref{fig:PerformanceSph} presents the performance profiles of the three algorithms using CPU time, iteration numbers and obtained gradient norm as metrics.   
\begin{figure}[h!]
	\centering
	\subfigure[CPU time]{\includegraphics[width=0.33\linewidth]{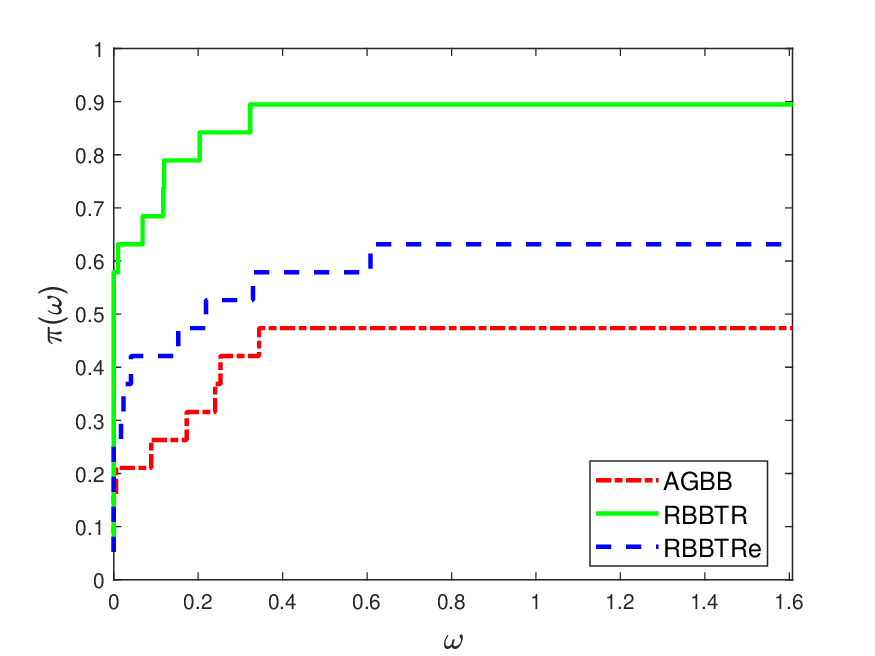}}\hspace{-6pt}
	\subfigure[Iteration number]{\includegraphics[width=0.33\linewidth]{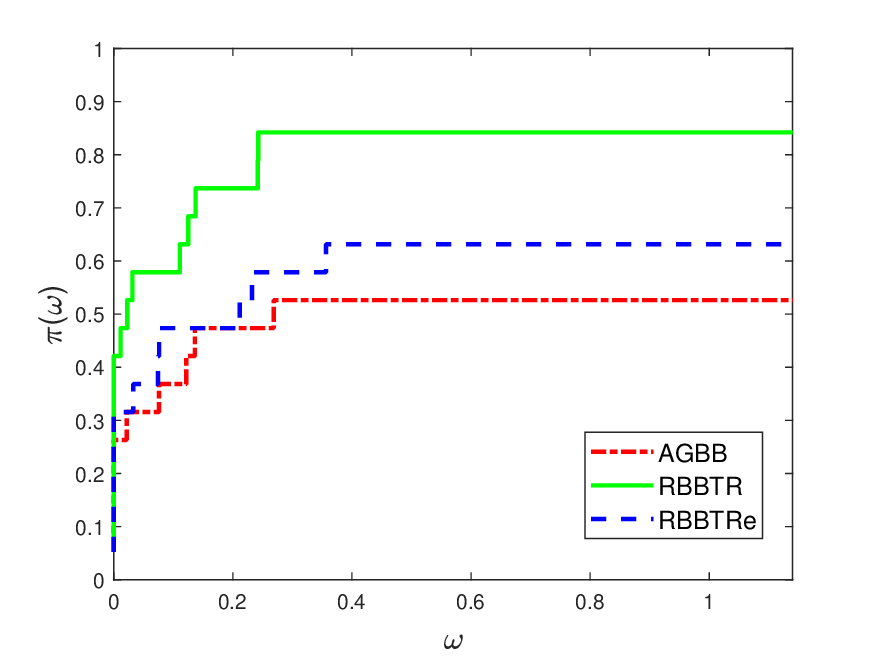}}\hspace{-6pt}
	\subfigure[Optimal gradient norm]{\includegraphics[width=0.33\linewidth]{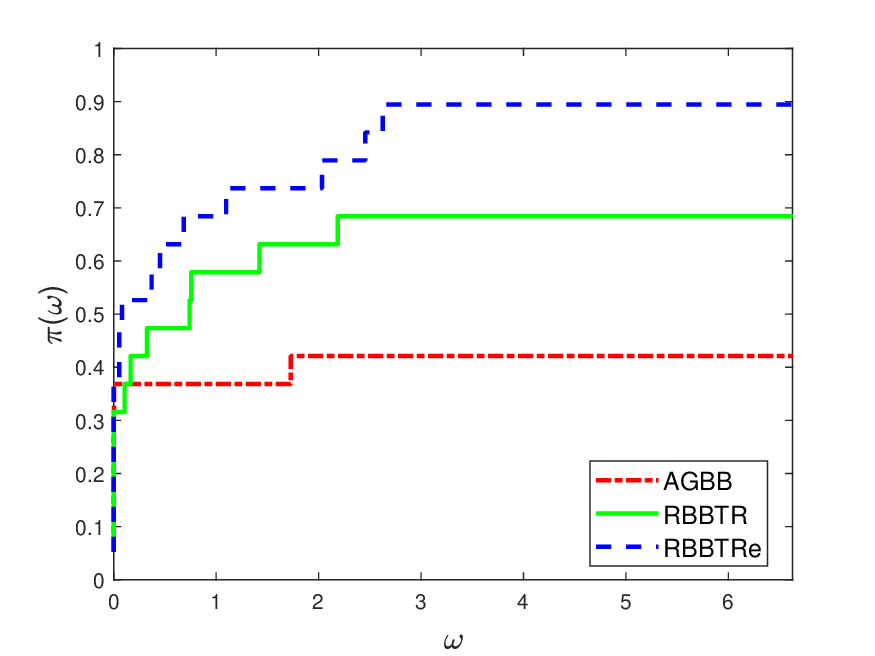}}
	\caption{\textit{Performance profiles of AGBB, RBBTR, RBBTRe algorithms on spherical $t$-designs.}}
	\label{fig:PerformanceSph}%Gnorm
\end{figure}

It can be seen that, in terms of CPU time and Iteration number, \text{RBBTR} performs better than \text{RBBTRe} and better than \text{AGBB}, in terms of optimal gradient norm $\|\nabla A_{N,t}^{*}(\mathcal{X})\|$ obtained, \text{RBBTRe} performs better than \text{RBBTR} and better than \text{AGBB}.

\section{Concluding remarks}
In this paper, we develop a Trust Region method with Regularized \BB step-size obtained in a previous paper for solving large-scale unconstrained optimization problems. Simultaneously, the non-monotone technique is combined to formulate an efficient trust region method. The proposed method adaptively generates a suitable step-size within the trust region. The minimizer of the resulted model can be easily determined, and at the same time, the convergence of the algorithm is also maintained. We refine the update rule of the trust region radius and use the relationship between the trust region and the regularization parameter to obtain two types of regularization parameters. Some numerical experiments verify the performances of the RBBTR and RBBTRe algorithms with adaptive regularization parameter. How to more accurately analyze the relationship between the regularization parameter and the performance of the trust region algorithm requires further exploration.

%\bmhead{Acknowledgements}
%
%Acknowledgements are not compulsory. Where included they should be brief. Grant or contribution numbers may be acknowledged.
%
%Please refer to Journal-level guidance for any specific requirements.

\section{Declarations}

\subsection{Ethical Approval}
Not Applicable.
\subsection{Availability of supporting data}
Data available on request from the authors.
\subsection{Competing interests}
The authors declare that there is no conflict of interest.
\subsection{Funding}
The work was supported by the National Natural Science Foundation of China (Project No. 12371099).

%%===========================================================================================%%
%% If you are submitting to one of the Nature Portfolio journals, using the eJP submission   %%
%% system, please include the references within the manuscript file itself. You may do this  %%
%% by copying the reference list from your .bbl file, paste it into the main manuscript .tex %%
%% file, and delete the associated \verb+\bibliography+ commands.                            %%
%%===========================================================================================%%

\bibliography{TRBB}% common bib file
%% if required, the content of .bbl file can be included here once bbl is generated
%%\input sn-article.bbl

\end{document}